\def\ver{  }
\newtheorem{theorem}{Theorem}[section]
\newtheorem{lemma}[theorem]{Lemma}
\newtheorem{proposition}[theorem]{Proposition}
\newtheorem{remark}[theorem]{Remark}
\numberwithin{equation}{section}
\def\eps{\varepsilon}
\def\phi{\varphi}
\def\qed{{\hfill $\square$ \bigskip}}
\def\diam{{\mathop {{\rm diam\, }}}}
\def\ignore#1{}  
\newcommand{\deq}{\stackrel{\scriptscriptstyle\triangle}{=}}
\newcommand{\N}{\mathbb{N}}
\newcommand{\E}{\mathbb{E}}
\newcommand{\tcov}{t_{\mathrm{cov}}}
\newcommand{\taucov}{\tau_{\mathrm{cov}}}
\renewcommand{\P}{\mathbb{P}}
\newcommand{\res}{R_{{\bf{eff}}}}
\newcommand{\ball}{B_{{\bf{eff}}}}
\newcommand{\be}{\begin{eqnarray}}
\newcommand{\ee}{\end{eqnarray}}
\newcommand{\K}{\mathcal{K}}
\newcommand{\GC}{{\mathcal{C}_1}} 
\newcommand{\one}{\mathbf{1}}
\newcommand{\tGC}{{\tilde{\mathcal{C}}_1}}
\newcommand{\C}{{\mathcal{C}}}
\newcommand{\Z}{\mathbb{Z}}
\newcommand{\bcr}{B}
\renewcommand{\diam}{{\rm diam}}
\newcommand{\diameff}{{\rm diam}_{{\bf{eff}}}}
\renewcommand{\and}{\hbox{ {\rm and} }}
\begin{document}

\title{\bf The evolution of the cover time}

\author{
Martin T. Barlow\footnote{Department of mathematics, University of British Columbia, Research partially supported by NSERC
(Canada) and the Peter Wall Institute of Advanced Studies. Email: barlow@math.ubc.ca}, Jian
Ding\footnote{Department of statistics, University of California at Berkeley, partially supported by Microsoft Research. Email: jding@berkeley.edu}, Asaf Nachmias \footnote{Department of Mathematics, Massachusetts Institute of Technology. Email: asafnach@math.mit.edu}, Yuval Peres\footnote{Microsoft Research. Email: peres@microsoft.com}}
\maketitle

\ver
\begin{abstract}
The cover time of a graph is a celebrated example of a parameter
that is easy to approximate using a randomized algorithm, but for
which no constant factor deterministic polynomial time approximation
is known. A breakthrough due to Kahn, Kim, Lov{\'a}sz and
Vu~\cite{KKLV} yielded a $(\log\log n)^2$ polynomial time
approximation. We refine the  upper bound of \cite{KKLV}, and show
that the resulting bound is sharp and explicitly computable in
random graphs. Cooper and Frieze showed that the cover time of the
largest component of the Erd\H{o}s-R\'enyi random graph $G(n,c/n)$
in the supercritical regime with $c>1$ fixed, is asymptotic to
$\varphi(c) n \log^2 n$, where   $\varphi(c) \to 1$ as $c \downarrow
1$. However, our new bound implies that the cover time for the
critical Erd\H{o}s-R\'enyi random graph $G(n,1/n)$ has order $n$,
and shows how the cover time evolves from the critical window to the
supercritical phase. Our general estimate also yields  the order of
the cover time for a variety of other concrete graphs, including
critical percolation clusters on the Hamming hypercube $\{0,1\}^n$,
on high-girth expanders, and on  tori $\Z_n^d$ for fixed large $d$. This approach also gives a simpler proof of a result of Aldous \cite{Aldous} that the cover time of a uniform labeled tree on $k$ vertices is of order $k^{3/2}$. 
For the graphs we consider, our results show that the {\em blanket\/} time, introduced by Winkler and Zuckerman \cite{WZ}, is within a constant factor of the cover time. Finally, we prove that for any connected graph, adding an edge can increase the cover time by at most a factor of 4.
\end{abstract}

\newpage

\section{Introduction}

The cover time $\tcov(G)$ of a graph $G$ is the expected number of steps a simple random walk takes to
visit every vertex of the graph $G$, starting from the worst
possible vertex. It has been studied extensively by computer scientists, due to its intrinsic appeal and its applications to designing universal traversal sequences \cite{AKLLR, Bridgland, Broder},
testing graph connectivity \cite{AKLLR, KR}, and protocol testing \cite{MP}; see \cite{aldous2} for an introduction to cover times.

Sophisticated methods to estimate the cover time have
been developed \cite{Feige-up, Feige-lower, Matthews, KKLV}. One of the most precise bounds was obtained by Kahn,
Kim, Lov{\'a}sz and Vu \cite{KKLV}. They gave polynomially
computable upper and lower bounds that differ by a factor of order
$(\log\log n)^2$. This breakthrough left several questions open:
\begin{description}
 \item[(i)] Can the bounds in \cite{KKLV} be represented by an explicit formula for concrete graphs of interest?
\item[(ii)] For such graphs, can the $(\log\log n)^2$ factor be removed?
\end{description}
In this work we improve the upper bound from \cite{KKLV} and show the resulting estimate is sharp up to a constant factor, and explicitly computable, for a large variety of graphs, in particular random graphs.

Let $G=(V,E)$ be a simple graph and write $\res(x,y)$ and $d(x,y)$
for the {\em effective resistance} and graph distance between two
vertices $x,y\in V$, respectively. See e.g. \cite{LP} or \cite{LPW} for definitions and properties of effective resistance. It is known that $\res(x,y) \leq
d(x,y)$ and that $\res(\cdot, \cdot)$ forms a metric on $G$. For
$x\in V$ and a real number $R>0$ we write $\ball(x, R)$ for the ball
of radius $R$ in the resistance metric, that is,
$$ \ball(x, R) = \{ v \in G : \res(x,v) \leq R \} \, .$$

\begin{theorem}\label{thm-cover}
Let $G=(V,E)$ be a finite graph with diameter $R$ in the resistance
metric.  For $i\in \N$, let
$A_i =A_i(G)$ be a set of minimal size such that
\be\label{def-a-cov} G \subset \bigcup_{ v \in A_i} \ball \big (v, {R \over 2^i}\big )\, ,\ee
and write $\alpha_i = 2^{-i}\log |A_i|$. Then
there exists a universal constant $C>0$ such that
\be \label{dudley} \tcov \le C\big(\mbox{$\sum_{i=1}^{\log_2 \log n}$} \sqrt{\alpha_i}\big)^2 R |E| \,.\ee
\end{theorem}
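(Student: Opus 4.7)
The shape of the upper bound, a squared Dudley-type entropy sum in the resistance metric, points directly to a generic-chaining argument using the nets $A_i$.  The key probabilistic input is the commute-time identity, giving $\E_u T_v \le 2|E|\,\res(u,v)$, together with exponential-tail refinements of the form $\P_u(T_v > t) \le \exp\bigl(-ct/(|E|\,\res(u,v))\bigr)$.  Under such tails, the hitting-time process $v \mapsto T_v$ behaves, for the purposes of a union bound, as a sub-Gaussian process in the metric $\sqrt{|E|\,\res}$, and the squared-sum form of the theorem is exactly the square of the Dudley bound for the expected maximum of such a process.  An equivalent route is through the Gaussian free field $\{\eta_v\}_{v\in V}$ with variogram $\res$: the cover time is comparable (via an isomorphism identity) to $|E|\,(\E\max_v\eta_v)^2$, and Dudley's inequality applied to $\eta$ in the metric $\sqrt{\res}$ produces the bound directly.

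Concretely, the plan is: (i) build a nested sequence of nets $A_0\subseteq A_1\subseteq\cdots\subseteq A_I$ with $I=\lceil\log_2\log n\rceil$, assigning to each $v\in A_i$ a parent $\pi(v)\in A_{i-1}$ at resistance distance at most $R/2^{i-1}$, giving a ``majorizing tree'' in the sense of Talagrand; (ii) prove by chaining along this tree that the expected time to visit every vertex of $A_I$ is at most $CR|E|\bigl(\sum_{i=1}^I\sqrt{\alpha_i}\bigr)^2$, the factor $\sqrt{\log|A_i|}$ per scale (rather than $\log|A_i|$) arising because a union bound over $|A_i|$ sub-Gaussian increments costs only $\sqrt{\log|A_i|}$ standard deviations; (iii) absorb the remaining terms into a constant: for $i>I$ one has $\alpha_i\le 2^{-i}\log n$, so $\sum_{i>I}\sqrt{\alpha_i}=O(1)$, and the residual balls of radius $R/2^I$ around each point of $A_I$ are swept up by one further application of Matthews' method contributing only $O(R|E|)$ additional steps.

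The principal obstacle lies in step (ii): upgrading the first-moment hitting-time bound $\E_u T_v\le 2|E|\res(u,v)$ to a chained tail estimate strong enough to replace each $\log|A_i|$ by $\sqrt{\log|A_i|}$.  A single application of Matthews' method yields only the coarse bound $\tcov\le CR|E|\log n$, which matches the theorem's bound only when all scales $i\le I$ contribute roughly equally.  The refinement exploits, at each scale and for each pair $(v,\pi(v))$, the exponential tail of the hitting time $T_v$ from the parent, together with a careful use of the strong Markov property to concatenate the scales without accumulating multiplicative losses; the sub-Gaussian cost per scale, summed and balanced by Cauchy--Schwarz, is what produces the claimed squared-sum form.
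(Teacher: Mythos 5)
Your architecture --- nested nets, a parent map at each scale, chaining truncated at $\log_2\log n$, a Dudley-type entropy sum --- matches the paper's proof, which credits the chaining scheme to Kolmogorov; the tail cleanup, noting that $|A_i|\leq n$ forces $\sum_{i>\log_2\log n}\sqrt{\alpha_i}=O(1)$, also matches the paper's device of replacing $\alpha_i$ by $\alpha_i'=\alpha_i\vee 2^{-i/2}$. But the probabilistic input you propose cannot yield the theorem, and you have not located the key idea. The exponential hitting-time tail $\P_u(T_v>t)\leq\exp(-ct/(|E|\,\res(u,v)))$ is linear in $t$, not quadratic in a deviation parameter, so a union bound over a net of size $|A_i|$ costs $\log|A_i|$ per scale rather than $\sqrt{\log|A_i|}$ --- that only recovers Matthews. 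Moreover it is a one-point tail from a fixed start $u$, not an increment condition on a process indexed by $V$, so there is no field to chain. You acknowledge an obstacle at step (ii) and invoke ``the strong Markov property to concatenate the scales,'' but concatenating sub-exponential excursion times cannot manufacture a sub-Gaussian increment bound.

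The missing idea is to chain the \emph{local time field} rather than hitting times. The paper's probabilistic input is KKLV's Lemma 5.2 (Lemma~\ref{lem:ltbnd}): after $t$ returns to $u$, the difference $L^u_{\tau^u_t}-L^v_{\tau^u_t}$ is a sum of $t$ i.i.d.\ excursion contributions whose variance is controlled by $\res(u,v)$, yielding the genuinely sub-Gaussian bound
$\P_u\big(L^u_{\tau^u_t}-L^v_{\tau^u_t}\geq\lambda\big)\leq\exp\big(-\lambda^2/(4t\,\res(u,v))\big)$.
This makes $v\mapsto L^v_{\tau^u_t}$ a sub-Gaussian process in the metric $\sqrt{\res}$ at scale $\sqrt{t}$, and it is exactly the quadratic dependence on $\lambda$ that pays only $\sqrt{\log|A_i|}$ per scale. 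The paper then runs the walk to time $\tau^u_{\Psi R}$ with $\Psi\asymp(\sum_i\sqrt{\alpha_i'})^2$, allocates a deviation budget $\beta_i\Psi R$ to each scale with $\beta_i\propto\sqrt{\alpha_i'}$, shows that with probability at least $1/3$ every vertex retains positive local time, and closes with $\E_v\taucov\leq 3\,\E_v\tau^v_{\Psi R}=6\Psi R|E|$ via $\E_v T_v^+=2|E|/d_v$. Your alternative Gaussian free field route would work and was indeed developed later by Ding, Lee and Peres (cited in the paper's concluding remark), but you do not develop it here, and in any case it is not this paper's argument.
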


The right hand side is approximable up to constant factors in polynomial time, see Remark \ref{approx}. Theorem \ref{thm-cover} is a refinement on \cite{KKLV}, in which it is shown that
\be \max_i \alpha_i  R |E|\leq \tcov(G) \leq C (\log\log n)^2 \cdot \max_i \alpha_i R |E|.\ee
The lower bound is a variant of Matthews' estimate for cover times
\cite{Matthews}, and the upper
bound is the main contribution of \cite{KKLV}. We refine the methods
of \cite{KKLV} to deduce the stronger statement of Theorem
\ref{thm-cover} (clearly, $(\sum_{i=1}^{\log_2 \log
n}\sqrt{\alpha_i})^2 \leq (\log_2 \log n)^2 \max_i \alpha_i$). This
new bound turns out to be sharp in many concrete examples where we can show that
$\max_i \alpha_i$ and $(\sum_{i=1}^{\log_2 \log n}\sqrt{\alpha_i})^2$
are of the same order. Such examples are
presented in Theorems \ref{evolution} and \ref{covercrit} below. \\


Cooper and Frieze \cite{CF} studied the cover time of the largest
component of the Erd\H{o}s-R\'enyi \cite{ER59} random graph model
$G(n,p)$, that is, the random graph obtained from the complete graph
$K_n$ by retaining each edge with probability $p$ independently. It
is well known that if $p={c\over n}$ for some $c>1$, then the
largest connected component, $\C_1$, is of size about $xn$ with
probability tending to $1$, where $x=x(c)$ is the unique solution in
$(0,1)$ of $x=1-e^{-cx}$. Cooper and Frieze \cite{CF} established
the asymptotics for the cover time in this regime,
$$ \tcov(\C_1) \sim \varphi(c) n \log^2  n
\quad \mbox{ with } \quad \varphi(c) = {cx(2-x) \over 4(cx - \log
c)}$$ with probability tending to $1$ as $n \to \infty$.

Since $\varphi(c)$ tends to $1$ as $c\to 1$, one might be tempted to guess that $\tcov(\C_1)$ for $G(n,1/n)$ is of order $n \log^2 n$.  However, it is known~\cite{NP1} that the maximal hitting time between two vertices in $\C_1$ is typically of order $n$, so Matthews bound \cite{Matthews}   shows that $\tcov(\C_1)$ is at most $O(n \log n)$. In fact,  in $G(n,1/n)$ the largest component
  $\C_1$ is roughly of size $n^{2/3}$ \cite{ERDREN, bollo, Lucz},
and with probability uniformly bounded away from $0$ it is a tree.
Aldous \cite{Aldous} proved that a random tree on $k$ vertices has
cover time of order $k^{3/2}$ (see Theorem \ref{aldoustree} for a precise statement and an alternative proof). Combining these facts yields that
$\tcov(\C_1)$ in $G(n,{1 \over n})$ is of order $n$ with probability
uniformly bounded away from $0$. In the following theorem we show that this probability tends to $1$, and moreover, we
show how the order of the cover time continuously evolves from
the critical regime $c=1$ to the supercritical regime $c>1$.

\begin{theorem} \label{evolution} Let $\tcov(\C_1)$ denote the cover time of largest component of $G(n,p)$ and let $\lambda\in \mathbb{R}$ be fixed and $\eps(n)>0$ be a sequence such that $\eps(n)\to 0$ but $n^{1/3} \eps(n) \to \infty$. Then
\begin{enumerate}
\item[(a)] If $p={1 - \eps(n) \over n}$, then for any $\delta>0$ there exists $ B>0$ such that
    $$ \P\Big ( B^{-1} \eps^{-3} \log^{3/2}(\eps^3 n) \leq \tcov(\C_1) \leq B \eps^{-3} \log^{3/2}(\eps^3 n) \Big ) \geq 1-\delta  \, .$$
\item[(b)] If $p={1 + \lambda n^{-1/3} \over n}$, then then for any $\delta>0$ there exists $B>0$ such that
    $$ \P\Big ( B^{-1} n \leq \tcov(\C_1) \leq B n \Big ) \geq 1-\delta \, .$$
\item[(c)] There exists a constant $C>0$ such that if $p={1 + \eps(n) \over n}$, then
     $$ \P \Big ( C^{-1} n \log^2 (\eps^3 n) \leq \tcov(\C_1) \leq C n \log^2 (\eps^3 n)  \Big ) \to 1 \, .$$
\end{enumerate}
\end{theorem}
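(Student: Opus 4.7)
The strategy is to apply Theorem~\ref{thm-cover} to $\C_1$ in each regime, combined with the matching lower bound $\tcov(G) \geq c\max_i \alpha_i R|E|$ from \cite{KKLV} (and, for parts (a) and (b), the commute-time identity $\E_x T_y + \E_y T_x = 2|E|\res(x,y)$, which gives $\tcov \geq \half \max_{x,y}\res(x,y)\cdot|E|$). In each regime the three inputs I will need about $\C_1$ are the edge count $|E(\C_1)|$, the resistance diameter $R$, and the covering numbers $|A_i|$ in the resistance metric.

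For parts (a) and (b), $\C_1$ is essentially tree-like: in (b) it has $\Theta(n^{2/3})$ vertices and $O(1)$ surplus edges, while in (a) it is a tree w.h.p.~on $k = \Theta(\eps^{-2}\log(\eps^3 n))$ vertices. Using existing structural results (Aldous on the CRT, Addario-Berry--Broutin--Goldschmidt, Nachmias--Peres), I will invoke that (i) $\res \asymp d$ on $\C_1$, since the surplus is small; (ii) the resistance diameter is $R = \Theta(k^{1/2})$; and (iii) balls of radius $r$ in the resistance metric typically contain $\Theta(r^2)$ vertices, so that $|A_i| \lesssim 2^{2i}$. The latter yields $\alpha_i \lesssim i\,2^{-i}$ and $\sum_i \sqrt{\alpha_i} = O(1)$, so Theorem~\ref{thm-cover} gives $\tcov(\C_1) \lesssim R|E(\C_1)| = \Theta(k^{3/2})$, which is $\Theta(n)$ in (b) and $\Theta(\eps^{-3}\log^{3/2}(\eps^3 n))$ in (a). The matching lower bound comes from the commute-time identity applied to two vertices at resistance distance $\Theta(R)$.

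In the supercritical regime (c), $\C_1$ is no longer tree-like and I will use the standard decomposition into (1) the $2$-core of size $\Theta(\eps^2 n)$, (2) its kernel of size $\Theta(\eps^3 n)$, obtained by contracting degree-$2$ paths and structurally close to a random cubic graph, and (3) the dangling trees attached to the $2$-core, of total size $\Theta(\eps n)$. This yields $|E(\C_1)| = \Theta(\eps n)$ and resistance diameter $R = \Theta(\eps^{-1}\log(\eps^3 n))$. Small-scale resistance balls live inside dangling trees or subsegments of $2$-core paths and grow polynomially, while larger-scale balls must resolve the expander-like kernel on $\Theta(\eps^3 n)$ vertices. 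A careful dyadic accounting will give $\max_i \alpha_i \asymp \log(\eps^3 n)$ and $\big(\sum_i \sqrt{\alpha_i}\big)^2 \asymp \log(\eps^3 n)$, so that Theorem~\ref{thm-cover} and the lower bound of \cite{KKLV} together yield $\tcov(\C_1) \asymp R|E(\C_1)|\cdot\log(\eps^3 n) = \Theta(n\log^2(\eps^3 n))$.

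The main obstacle will be the covering-number estimates. For (a) and (b) the CRT-like bound $|A_i| \asymp 2^{2i}$ requires precise two-point function and volume estimates for critical random-graph components in the resistance metric; the control of effective resistance in the barely subcritical large-deviation regime (where the conditioned size $k$ is much larger than $\eps^{-2}$) is the delicate point. In (c), the contributions to $\sum_i \sqrt{\alpha_i}$ from the kernel, the connecting paths, and the dangling trees must telescope to produce exactly one factor of $\sqrt{\log(\eps^3 n)}$ and no spurious $\log\log n$ losses.
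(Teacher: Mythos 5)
Your overall plan---pairing the new upper bound of Theorem~\ref{thm-cover} with a Matthews-type lower bound, and feeding in structural information about $\C_1$ in each regime---is the right skeleton, and your parameter bookkeeping in part (c) ($\max_i\alpha_i\asymp\log(\eps^3 n)$ and $(\sum_i\sqrt{\alpha_i})^2\asymp\log(\eps^3 n)$, with $R\asymp\eps^{-1}\log(\eps^3 n)$ and $|E|\asymp\eps n$) is correct. But the route differs from the paper's in ways that matter.

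For part (b), the paper does not reason via CRT-style resistance geometry at all. It proves a general criterion (Theorem~\ref{criticalgraphs}): if $\E|B_p(x,r;G)|\lesssim r$ and the intrinsic arm probability satisfies $\Gamma_p(x,r;G)\lesssim 1/r$, then large components have $\tcov\asymp n$. The covering sets $A'_i$ are built from intrinsic-metric annuli and ``good'' boundary vertices, and the resistance diameter lower bound comes from Nash--Williams plus an intrinsic-ball volume bound. This is more robust than the two-point-function picture you invoke, and it is precisely what makes Theorem~\ref{covercrit} applicable on tori and hypercubes. Your version would require establishing resistance-ball volume estimates $|\ball(v,r)|\asymp r^2$ directly, which is harder and which you do not supply.

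For part (a), you correctly flag the large-deviation regime as the delicate point, but you do not resolve it and I think your route would get stuck there. The paper sidesteps the issue entirely: the subcritical $\C_1$ is a \emph{uniform random tree} on $k=\Theta(\eps^{-2}\log(\eps^3 n))$ vertices, which is a Poisson$(1)$ Galton--Watson tree conditioned on $|T|=k$. The covering estimate is then proved directly via GW survival probabilities (Theorem~\ref{aldoustree}), which only yields the result conditioned on $|T|\in[k,2k]$; the paper closes the gap to exact-size conditioning using the Luczak--Winkler monotone coupling of conditioned GW trees. None of these three ingredients (uniform-tree identification, GW reformulation, Luczak--Winkler coupling) appear in your plan, and without them I do not see how to carry out ``CRT-like covering estimates'' under the large-deviation conditioning.

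For part (c), your lower bound via $\max_i\alpha_i R|E|$ would require a covering/packing lower bound, i.e.\ exhibiting $(\eps^3 n)^{\Omega(1)}$ vertices pairwise at resistance distance $\Omega(R)$. This is doable and is implicitly the same information, but the paper applies Matthews directly: take one leaf from each of $(\eps^3 n)^{1/4}$ attached PGW trees of height $\gtrsim\eps^{-1}\log(\eps^3 n)$, bound pairwise hitting times from below by $\asymp n\log(\eps^3 n)$ via the voltage formula of Lemma~\ref{lem-network-hitting-time}, and conclude $\tcov\gtrsim\log|A|\cdot\min\E_u\tau_v$. The upper bound matches your sketch in outline, but the paper relies crucially on the contiguity structure theorem of Ding--Kim--Lubetzky--Peres (kernel / Geom paths / attached PGW trees), which you allude to as ``the standard decomposition'' without invoking the needed distributional control: covering numbers for the 2-core come from a maximal packing argument, and covering numbers for the attached trees come from $\E[F_T]\lesssim\eps 2^{2i}$ via PGW height tail estimates. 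Your ``careful dyadic accounting'' statement does not supply these, and they are the core of the argument.
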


Theorem \ref{thm-cover} also allows us to prove sharp bounds on
cover time for critical percolation clusters, even when the
underlying graph is {\em not} the complete graph. Given a graph $G$
on $n$ vertices and $p\in[0,1]$, the random graph $G_p$ is obtained
from $G$ by retaining each edge with probability $p$ independently.
In the special case of $G = K_n$, this yields  the Erd\H{o}s-R\'enyi graph $G(n,p)$.
For a vertex $v\in G$ we write $\C(v)$ for the connected component
in $G_p$ containing $v$, and denote by $\C_1$ the largest connected
component of $G_p$. We are interested in critical percolation in which
$|\C_1|\approx n^{2/3}$. This occurs in numerous underlying graphs
$G$. A partial list of examples is:
\begin{enumerate}
\item The complete graph on $n$ vertices \cite{ERDREN, bollo, Lucz} with $p={1 + \Theta(n^{-1/3}) \over n}$,
\item A random $d$-regular graph \cite{NP2, Pittel} with $p={1 + \Theta(n^{-1/3}) \over d-1}$,
\item Expanders of high girth and degree $d$ \cite{Nachmias} with $p={1 + \Theta(n^{-1/3}) \over d-1}$,
\item The Hamming hypercube $\{0,1\}^m$ \cite{BCHSS} with $p$ satisfying $\E_p|\C(v)|=\Theta(n^{1/3})$,
\item Discrete tori $\Z_m^d$ for large but fixed dimension $d$ with $p=p_c(\Z^d)$ or $p$ satisfying $\E_p|\C(v)|=\Theta(n^{1/3})$ \cite{BCHSS, HH0, HH}.
\end{enumerate}
In all the examples above it is known that for any $\delta>0$ there
exists $B=B(\delta)>0$ such that
$$ \P_p\big ( B^{-1} n^{2/3} \leq |\C_1| \leq Bn^{2/3} \big ) \geq 1-\delta \, .$$
The following theorem is a generalization of part (b) of Theorem
\ref{evolution}, and states that in these cases $\tcov(\C_1)$ has
order $n$. This means that the cover time of the largest component
has the same order as the cover time of a random tree on the same
number of vertices. We note that unlike the $G(n,p)$ case, in
examples 4 and 5, the probability that the largest component is   a tree tends to zero as the volume grows,
 so the Aldous estimate \cite{Aldous} does not apply.
\begin{theorem} \label{covercrit} In examples $1-5$ above, we have that for any $\delta>0$ there exists $B=B(\delta)>0$ such that
$$ \P_p\big ( B^{-1} n \leq \tcov(\C_1) \leq Bn \big ) \geq 1-\delta \, .$$
\end{theorem}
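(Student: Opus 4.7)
The strategy is to apply Theorem~\ref{thm-cover} for the upper bound and the commute-time identity for the lower bound. The common input for all five examples is that at criticality the largest component $\C_1$ has the metric-measure structure of a rescaled continuum random tree on $n^{2/3}$ vertices. Concretely, for each of the examples 1--5, I would extract from the cited literature three estimates that hold with probability at least $1-\delta/2$ for a single constant $B=B(\delta)$: (P1) $|E(\C_1)| \le B n^{2/3}$, which follows from $|\C_1|\le B n^{2/3}$ together with tightness of the surplus of $\C_1$; (P2) $B^{-1} n^{1/3} \le \diameff(\C_1) \le B n^{1/3}$, the upper half coming from $\res\le d$ on a graph; and (P3) a uniform two-dimensional volume lower bound $|\ball(x,r)| \ge B^{-1} r^2$ for every $x \in \C_1$ and every scale $r$ in the range $[\diameff(\C_1)/\log n,\,\diameff(\C_1)]$.

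Given (P1)--(P3), set $R = \diameff(\C_1)$. A maximal-packing argument shows that (P3) implies the covering number at scale $R/2^i$ is at most $C\,|\C_1|/(R/2^{i+1})^2 \lesssim 2^{2i}$ for all $i \le \log_2 \log n$, so $\alpha_i = 2^{-i}\log|A_i| \lesssim i\, 2^{-i}$ and hence $\sum_{i\ge 1}\sqrt{\alpha_i} = O(1)$. Plugging this together with (P1) and (P2) into \eqref{dudley} yields $\tcov(\C_1) \le C\, R\, |E(\C_1)| \le C'\, n^{1/3}\cdot n^{2/3} = O(n)$.

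For the matching lower bound I would use the commute-time identity $\bE_x\tau_y + \bE_y\tau_x = 2|E(\C_1)|\,\res(x,y)$ with $x,y \in \C_1$ realising $\res(x,y)=R$; this exhibits a pair for which the mean hitting time is at least $|E(\C_1)|\, R$. Combining the matching lower halves of (P1) and (P2), namely $|E(\C_1)| \ge |\C_1|-1 \ge B^{-1} n^{2/3}$ and $R \ge B^{-1} n^{1/3}$, and noting that the cover time dominates the maximum mean hitting time, yields $\tcov(\C_1) \ge B^{-2} n$.

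The principal obstacle is verifying (P1)--(P3) uniformly across the five rather different graph families; once they are in place the cover-time analysis is identical in each case. For the complete graph $K_n$ these estimates follow from the CRT scaling limit of $\C_1$ together with the metric and heat-kernel estimates on $\C_1$ used in \cite{NP1}. For random regular graphs and for expanders of high girth the analogous statements are available in \cite{NP2, Nachmias}, and for the Hamming cube and for high-dimensional tori they are established in \cite{BCHSS, HH0, HH} via the lace expansion, which places critical percolation on those graphs in the same mean-field universality class as on $K_n$. The volume lower bound (P3) is the most delicate of the three inputs, since it must hold \emph{uniformly} in $x \in \C_1$ at every scale appearing in the sum of Theorem~\ref{thm-cover}.
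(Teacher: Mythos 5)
Your high-level strategy matches the paper: apply Theorem~\ref{thm-cover} for the upper bound (so the work reduces to bounding $\sum_i\sqrt{\alpha_i}$, $\diameff(\C_1)$, and $|E(\C_1)|$), and use the commute-time identity together with the resistance diameter for the lower bound. The outer shell of your argument — including (P1), (P2), and the final arithmetic $\tcov \lesssim R\cdot|E| \asymp n^{1/3}\cdot n^{2/3}=n$ — is exactly right and agrees with the paper. The paper factors this out as a general criterion (Theorem~\ref{criticalgraphs}), whose hypotheses are then checked against \cite{NP1,KN1,KN2,HH0,HH} for the five examples; your proof tries to do both at once, but the citations you invoke are the same ones.

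The genuine gap is in your key input (P3), the uniform volume lower bound $|\ball(x,r)|\geq B^{-1}r^2$ for all $x\in\C_1$ and all scales $r\in[\diameff/\log n,\diameff]$. This is a much stronger statement than the paper uses, it is not established in the cited references, and it is not clear that it holds: critical clusters contain near--paths (thin tentacles), and for a center $x$ sitting on such a tentacle the resistance ball $\ball(x,r)$ can have size $\Theta(r)$ rather than $\Theta(r^2)$. Such centers are rare, but your packing argument requires the lower bound to hold \emph{simultaneously} at every packing center, which you cannot control a priori. The paper circumvents this entirely. Rather than a uniform deterministic volume bound, Theorem~\ref{criticalgraphs} postulates (i) an upper bound on the \emph{expected} intrinsic-metric ball volume, $\E|\bcr_p(x,r)|\leq c_1 r$, and (ii) an arm-exponent bound $\Gamma_p(x,r)\leq c_2/r$; it then constructs covering sets $A'_i$ out of ``$i$-good'' boundary vertices at carefully chosen radii $r_j$, so that $\E|A'_i|\lesssim 2^{2i}n^{-1/3}$, and applies Markov's inequality with the generous threshold $e^{m2^{i/2}}$ (not $2^{2i}$). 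This yields $|A_i|\leq e^{m2^{i/2}}$ with high probability after a union bound over $i$, which is still enough since $\alpha_i=2^{-i}\log|A_i|\leq m\,2^{-i/2}$ gives $\sum_i\sqrt{\alpha_i}\leq 4m$. In short: you should replace (P3), and the deterministic maximal-packing argument built on it, by an expectation-plus-Markov argument in the intrinsic metric, as in Theorem~\ref{criticalgraphs}; the two hypotheses of that theorem are exactly the estimates proved in the literature you cite, whereas (P3) is not.
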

\noindent In fact, in Section \ref{criticalsec} we provide a general
criterion for the conclusion of Theorem \ref{covercrit} to hold, which
applies to examples $1-5$, see Theorem \ref{criticalgraphs}. \\

\noindent {\bf Remark.} The {\em blanket\/} time $B$ is the expected
first time when the local times at all vertices are within a factor of $2$ from each other (the local time at a vertex $v$ is the number of visits to $v$ divided by the degree of $v$). This quantity was introduced by Winkler and Zuckerman \cite{WZ} (we use the definition of \cite{KKLV}) who conjectured that $B = O(\tcov)$ for any graph. The bounds in Theorems $1.1-1.3$ also apply to $B$ in place of $\tcov$. This will be clear from the proofs. \\

Finally, it is natural to guess that adding edges to a graph can only decrease the cover time. However, this is not the case, as  shown by the following example. Let $K_n^*$ be the graph obtained from $K_n$ (the complete graph  on $n$ vertices) by adding a new vertex $v$  and connecting it to one vertex of $K_n$. The cover time of $K_n^*$ is easily seen to be  $n^2$. On the other hand, if we replaces $K_n$ by $H_n$, a bounded degree expander on $n$ vertices, and construct $H_n^*$ by adding a new vertex $v$  and connecting it to one vertex of $H_n$, then the cover time of $H_n^*$ is of order $n \log n$. Since $H_n^*$ is a subgraph of $K_n^*$ on the same $n+1$ vertices, we conclude that adding an edge to a graph may increase the cover time.  The increase is at most by a constant factor:

\begin{proposition}\label{addedge} Let $G$ be a connected graph and let $u,v\in G$ be two vertices. Let   $G^+$ be the graph obtained from $G$ by adding the edge $\{u,v\}$ (if an edge connecting these two vertices already exists, then we add a multiple edge, and if $u=v$, then we add a loop). Then we have
$$ \tcov(G^+) \leq 4 \tcov(G) \, .$$
\end{proposition}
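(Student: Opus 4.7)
The plan is to couple a simple random walk $Y$ on $G^+$ with a simple random walk $X$ on $G$, exploiting the fact that the new edge $\{u,v\}$ is traversed only from $u$ or $v$ and, conditionally, with probability at most $1/2$ at those vertices. First I would decompose each $Y$-step as a \emph{$G$-step} (along an original edge) or a \emph{new-edge step}; the stationary distribution $\pi^+(x)=d^+(x)/(2|E|+2)$ gives that the long-run fraction of new-edge steps is exactly $\pi^+(u)/(d(u)+1)+\pi^+(v)/(d(v)+1)=1/(|E|+1) \le 1/2$. Consequently, whenever the subsequence of $G$-steps has made $K$ transitions, the total number of $Y$-steps is at most $2K$ in expectation.

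Next, consider the subsampled chain $\tilde{Y}_k := Y_{\tau_k}$, where $\tau_k$ is the $k$-th $G$-step time. This Markov chain on $V(G)$ agrees with SRW on $G$ at every vertex outside $\{u,v\}$; at $u$ its next position is a uniform $G$-neighbor of either $u$, with probability $p_u = d(u)(d(v)+1)/[(d(u)+1)(d(v)+1)-1] \ge 1/2$, or of $v$, with probability $1-p_u$, and symmetrically at $v$ (solving the linear recursion for the geometric number of new-edge bounces between $u$ and $v$ before the next $G$-step). Each $Y$-step that is not a $G$-step is merely a swap between $u$ and $v$, so the vertex set visited by $Y$ coincides with that visited by $\tilde{Y}$ up to possibly one of $\{u,v\}$; this discrepancy is controlled since $\res^{G^+}(u,v)\le 1$ forces the commute time between $u$ and $v$ in $G^+$ to be at most $2(|E|+1)$, a lower-order term compared to $\tcov(G)$.

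The central obstacle is to show $\tcov(\tilde{Y}) \le 2\tcov(G)$. My plan is to couple $\tilde{Y}$ with SRW $X$ on $G$ from a common start: synchronize at all vertices outside $\{u,v\}$; at $u$ synchronize on the probability-$p_u$ branch (and analogously at $v$); on the ``teleport'' branch, where $\tilde{Y}$ jumps to a uniform $G$-neighbor of $v$, let $X$ run independently until it hits that target and then resume the coupling. The expected re-synchronization cost is controlled by hitting times in $G$, which by the commute-time identity are at most $2|E|\res(u,v)\le 2\tcov(G)$; charging these extra costs against the rare teleport events (whose frequency relative to all $\tilde Y$-steps is $O(1/|E|)$ by the stationary computation of the first step) yields the factor-$2$ loss. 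Combining the two steps delivers $\tcov(G^+) \le 2\cdot 2\,\tcov(G) = 4\,\tcov(G)$. The main difficulty, as suggested by the non-reversibility of $\tilde Y$ at $u,v$, lies in implementing the teleport accounting cleanly without reintroducing a logarithmic factor via Matthews-type bounds.
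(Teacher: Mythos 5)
Your proposal is in the same family as the paper's argument (a time change that removes the steps attributable to the new edge), but the time change you choose creates the central difficulty you flag, and the coupling you sketch to overcome it does not work in the direction you need. The chain $\tilde{Y}$ obtained by watching $Y$ only at $G$-steps is \emph{not} a SRW on $G$: at $u$ and $v$ it has the ``teleport'' transitions you describe. To control $\tcov(\tilde{Y})$ you couple $\tilde{Y}$ with a SRW $X$ and, at a teleport, run $X$ independently until it catches up. But this makes $X$ visit \emph{strictly more} vertices than $\tilde{Y}$ up to each re-synchronization, so it yields an upper bound on the cover time of $X$ (which you already know) rather than on the cover time of $\tilde{Y}$. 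A bound on $\tcov(\tilde{Y})$ would require the reverse comparison, and the ``charge against rare teleports'' accounting is not clean: the teleport frequency is $O(1/|E|)$ only in stationarity, not obviously at the (random) cover time, and a single catch-up can cost up to the max hitting time, which is not $O(|E|)$ in general. Your computation of $p_u$ and the Wald-type argument for the factor-of-two time change are fine; the gap is exactly where you say ``the main difficulty lies,'' and the proposal does not close it.

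The paper avoids this by choosing a smarter time change: instead of skipping individual new-edge steps, it skips the entire excursion that begins the moment the walk crosses the new edge (say from $u$) and ends the first time the walk \emph{returns to $u$}. With that choice, the residual chain is \emph{exactly} a SRW on $G$ — there is no distorted $\tilde{Y}$ to compare, hence no coupling is needed. The excess time is then precisely the total excursion length, and this is computed exactly: the expected number of excursions launched from $u$ up to the cover-and-return time $\taucov^*$ is $\tfrac{1}{d_u}\E[L_u(\taucov^*)] = \tfrac{1}{2|E(G)|}\E[\taucov^*]$ (Aldous--Fill local time formula), each excursion has expected length $1+\E_v[\tau_u^+]$, and summing over $u,v$ and applying the commute-time identity together with $R^+_{\mathrm{eff}}(u,v)\le |E(G)|/(|E(G)|+1)$ and $\E[\taucov^*]\le 2\tcov(G)$ gives the factor $4$. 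If you replace your definition of $\tau_k$ by the paper's excursion times, the rest of your plan (time change plus excursion accounting) goes through and the problematic coupling disappears.
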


\section{Proof of Theorem \ref{thm-cover}}
Let $S_t$ be a simple random walk on $G$, and for an integer $t\geq
0$, define the {\em local time} $L^v_t$ of a vertex $v\in V$ by
\begin{equation}
  L^v_t \deq \frac{1}{d_v} \sum_{k=0}^t 1_{\{S_k = v\}}\,, \mbox{ for all
  } v\in V \mbox{ and } t\in \N\,,
  \end{equation}
where $d_v$ is the degree of vertex $v$. Furthermore, let $\tau^v_k
= \min \{t\in \N: L^v_t = k/d_v\}$ be the time of the $k$-th visit of the random walk to $v$. The following lemma of \cite{KKLV} implies that if the local time at a
vertex $u$ is large, then with high probability, the local time is also large at
vertices $v$ that are close to $u$ in the resistance metric.
\begin{lemma}\cite{KKLV}*{Lemma 5.2} \label{lem:ltbnd}
For all $u,v\in V$,  numbers $\lambda \geq 0$ and $t\in \N$  we have
\begin{equation*}
 \P_u\big( L^u_{\tau^u_t} - L^v_{\tau^u_{t}} \geq \lambda \big)
 \leq \exp \big(-\tfrac{\lambda^2}{4 t R_{\mathrm{eff}}(u, v)}\big)\,.
\end{equation*}
\end{lemma}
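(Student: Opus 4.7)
The plan is to apply an exponential Chernoff bound to the excursion decomposition of the walk at $u$. For $1\le i\le t$, let $N_i$ denote the number of visits to $v$ during the $i$-th excursion from $u$, i.e.\ during $[\tau^u_{i-1},\tau^u_i)$. By the strong Markov property the variables $N_1,\ldots,N_t$ are iid, and one has
\[
L^v_{\tau^u_t} \;=\; \frac{1}{d_v}\sum_{i=1}^t N_i
\qquad\text{and}\qquad
L^u_{\tau^u_t} \;=\; \frac{t}{d_u}.
\]
Writing $R:=R_{\mathrm{eff}}(u,v)$, the classical resistance identities $\P_u[T_v<T_u^+]=1/(d_u R)$ and $\P_v[T_u<T_v^+]=1/(d_v R)$ pin down the law of $N_1$: it equals zero with probability $1-1/(d_u R)$, and conditional on being positive it is geometric with parameter $1/(d_v R)$. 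In particular $\E[N_1]=d_v/d_u$, so the iid mean-zero variables $D_i:=1/d_u-N_i/d_v$ satisfy $L^u_{\tau^u_t}-L^v_{\tau^u_t}=\sum_{i=1}^t D_i$.

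Next I apply the Chernoff inequality: for every $\theta\ge 0$,
\[
\P\Big(\sum_{i=1}^t D_i \ge \lambda\Big) \;\le\; e^{-\theta\lambda}\bigl(\E[e^{\theta D_1}]\bigr)^t.
\]
The crucial ingredient is the one-sided sub-Gaussian MGF bound
\[
\E\bigl[e^{\theta D_1}\bigr] \;=\; e^{\theta/d_u}\,\E\bigl[e^{-\theta N_1/d_v}\bigr] \;\le\; \exp\bigl(\theta^2 R\bigr),\qquad \theta\ge 0,
\]
which I plan to verify from the closed-form expression $\E[e^{-\alpha N_1}]=1-p(1-e^{-\alpha})/(1-(1-q)e^{-\alpha})$ with $p=1/(d_u R)$, $q=1/(d_v R)$, $\alpha=\theta/d_v$. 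Once this estimate is in hand, optimizing the Chernoff bound at $\theta=\lambda/(2tR)$ produces the exponent $-\lambda^2/(4tR)$ as stated.

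The main obstacle is proving the MGF inequality across the entire range $\theta\ge 0$. Because $N_1$ is unbounded above, the variable $D_1$ has a heavy lower tail, so Hoeffding-style arguments for bounded increments do not apply and we must exploit the specific Bernoulli--geometric structure. My plan is to introduce $\phi(\theta):=\theta/d_u+\log \E[e^{-\theta N_1/d_v}]$, note that $\phi(0)=\phi'(0)=0$ (reflecting $\E[D_1]=0$), and show $\phi(\theta)\le \theta^2 R$ by a direct calculus argument using the closed form above. Two sanity checks that the bound is of the right order: near $\theta=0$ a Taylor expansion gives leading coefficient $\tfrac12 \var(D_1)=\var(N_1)/(2d_v^2)\le R/d_u\le R$; and as $\theta\to\infty$, $\E[e^{-\theta N_1/d_v}]\to \P(N_1=0)>0$, so $\phi$ grows only linearly and is trivially dominated by $\theta^2 R$. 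Controlling $\phi$ in the intermediate regime is the genuinely delicate part, and is where the geometric tail of $N_1$ plays an essential role.
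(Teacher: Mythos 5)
The paper does not prove this lemma at all --- it is quoted verbatim from \cite{KKLV}*{Lemma 5.2} --- so the relevant comparison is with the KKLV argument, and your outline (excursion decomposition at $u$, the Bernoulli--geometric law of the per-excursion visit count $N_i$ via $\P_u[T_v<T_u^+]=1/(d_uR)$ and $\P_v[T_u<T_v^+]=1/(d_vR)$, then a Chernoff bound optimized at $\theta=\lambda/(2tR)$) is exactly the standard route. The setup is correct: the closed form for $\E[e^{-\alpha N_1}]$, the centering $\E[D_1]=0$, and the variance computation $\tfrac12\var(D_1)\le R/d_u$ all check out. The problem is that the entire analytic content of the lemma sits in the one inequality you do not prove, namely $\phi(\theta):=\theta/d_u+\log\E[e^{-\theta N_1/d_v}]\le \theta^2R$ for the relevant range of $\theta$; you yourself flag the intermediate regime as ``the genuinely delicate part'' and leave it as a plan. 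Without that estimate there is no proof: the local Taylor bound $\tfrac12\var(D_1)\le R/d_u$ and the linear growth at infinity do not combine to give a global quadratic bound, and naive substitutes (Hoeffding, or the elementary bound $e^x\le 1+x+\tfrac{x^2}{2}e^{x\vee 0}$ on $0\le\theta\le 1/(d_uR)$) lose a constant factor and do not reproduce the stated exponent $\lambda^2/(4tR)$. One useful observation that shrinks the gap but does not close it: since $N_1\ge 0$, one has $\phi(\theta)\le \theta/d_u\le R\theta^2$ whenever $\theta\ge 1/(d_uR)$, so only $0\le\theta\le 1/(d_uR)$ needs the closed-form analysis; the limiting checks there (e.g.\ the case $d_u=1$, $R=1$, $d_v\to\infty$, where the inequality degenerates to $\theta-\log(1+\theta)\le\theta^2$) are consistent with the claim, but a verification valid for all admissible $(d_u,d_v,R)$ with the constant needed for the exponent $4$ is precisely what must be supplied.

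A second, smaller issue is an off-by-one in the decomposition. Under $\P_u$ the walk starts at $u$, so by the paper's convention $\tau^u_1=0$ and there are only $t-1$ i.i.d.\ excursions up to $\tau^u_t$; hence
\begin{equation*}
L^u_{\tau^u_t}-L^v_{\tau^u_t}=\frac{1}{d_u}+\sum_{i=1}^{t-1}\Bigl(\frac{1}{d_u}-\frac{N_i}{d_v}\Bigr),
\end{equation*}
which is not a centered sum of $t$ terms as you wrote. Carrying the extra $1/d_u$ (or adjusting the convention for $\tau^u_t$) through the Chernoff step is routine but must be done if you want the inequality exactly as stated, with $t$ in the denominator and the constant $4$, rather than a bound of the same shape with slightly worse bookkeeping.
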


We use an idea of Kolmogorov \cite{stroock}*{page 91}. For all $i\geq 1$ and for each
$u \in A_{i}$, we can always select $v\in A_{i-1}$ such that
$R_{\bf{eff}}(u,v) \leq 2^{-(i-1)} R$ (see (\ref{def-a-cov})). Write $v = h(u)$. Set $\alpha'_i =
\alpha_i \vee 2^{-i/2}$ and define
\[\Psi = 128 (\mbox{$\sum_{i=1}^\infty $} \mbox{$\sqrt{\alpha'_i}$})^2, \mbox{
and } \beta_i = \frac{\sqrt{\alpha'_i}}{2\sum_{i=1}^\infty
\sqrt{\alpha'_i}}\mbox{ for all } i\in \N\,.\] For $i\in \N$, let
$t_i = (1 - \sum_{j=1}^{i} \beta_j) \Psi$,  and for $u\in A_{i}$
define $M_i(u)$ to be the difference of the local times of vertices
$h(u)$ and $u$ at time $\tau^{h(u)}_{t_{i-1} R}$, by
\[
M_i(u) = t_{i-1} R - L^u_{\tau^{h(u)}_{t_{i-1}R}} \, .\]
Lemma~\ref{lem:ltbnd} then gives that
 \[ \P( M_i(u) \geq \beta_i \Psi R) \le  \exp\Big(
-\frac{(\beta_i \Psi  R)^2 }{ 4 R 2^{-(i-1)} \cdot t_{i-1} R} \Big)
\leq e^{-  2^{i+1} \alpha'_i }\,.\] Define $\displaystyle M_i =
\max_{u\in A_{i}} M_i(u)$. Recalling the definition of $\alpha_i$
and $\alpha'_i$, we apply a union bound and get
\[\P(M_i \geq \beta_i \Psi R) \leq |A_{i}| e^{- 2^{i+1} \alpha'_i  } \leq \mathrm{e}^{- 2^i \alpha'_i} \,.\]
It follows that \be\label{hitall} \P \Big ( \mbox{$\bigcup_{i \geq
1}$} \{M_i \geq \beta_i \Psi R\} \Big ) \leq \sum _{i \geq 1}
\mathrm{e}^{ - 2^i \alpha'_i} \leq \sum_{i\geq 1}
\mathrm{e}^{-2^{i/2}} \leq \frac{2}{3} \, .\ee

Now, take $v \in V$ and write $\taucov$ for the cover time of the
random walk. Provided that the event $\mathcal{M} \deq
\bigcap_{i\geq 1}\{M_i \leq \beta_i \Psi R\}$ occurs, we have that
$L^u_{\tau^v_{\Psi R}} \geq (1- \sum_{j=1}^{i} \beta_i) \Psi R $ for
all $u\in A_i$ and hence $L^u_{\tau^v_{\Psi R}} \geq \Psi R/2$ by
the definition of $\beta_i$. In particular, on the event
$\mathcal{M}$ every vertex in the graph should have been visited at
least once. Combined with \eqref{hitall}, it follows that
$$\P_v(\taucov \geq \tau^v_{\Psi R}) \leq {2 \over 3} \, ,$$ and hence $\E_v \taucov \leq 3 \E_v \tau^v_{\Psi R}$. The expected return time to $v$ satisfies $\E_v T^+_v = {2 |E| \over d_v}$ whence
\begin{equation}\label{eq-cover-ball}\E_v \taucov \leq 3 \Psi R d_v \E_v T^+_v = 6 \Psi R |E| \, .\end{equation}
Since the above holds for all $v\in V$, we have $\tcov \leq 6 \Psi R
|E|$. Note that $|A_i| \leq n$ for all $i\in \N$ and hence
$\sum_{i\geq \log_2 \log n} \sqrt{\alpha_i} = O(1)$. Observing also
that $\alpha_i \leq \alpha'_i + 2^{-i/2}$, we get $\Psi \leq 256 (
(\sum_{i=1}^{\log_2 \log n} \sqrt{\alpha_i})^2 + 16)$. It completes
the proof of the theorem together with the fact that $\alpha_1 \geq
\frac{1}{2} \log 2$ (since $|A_1|$ has to be at least $2$).

\begin{remark}\label{approx}
Note that the sum $\sum_i \sqrt{\alpha_i}$ can be easily
approximated up to constant. To see this, one can use greedy
algorithm to find a maximal collection of centers $\tilde{A}_i$ such
that $\{B_{\mathbf{eff}}(v, 2^{-(i+1)} R) : v\in \tilde{A_i}\}$
forms a collection of disjoint balls. Thus, $|\tilde{A}_{i-1}| \leq
|A_{i}| \leq |\tilde{A}_i|$ and
\[\tfrac{1}{\sqrt{2}} \sum_i \sqrt{2^{-i}\log |\tilde{A}_i|} \leq \sum_{i} \sqrt{\alpha_i} \leq \sum_i \sqrt{2^{-i}\log|\tilde{A}_i|}\,.\]
\end{remark}

\section{Cover time of critical percolation clusters}\label{criticalsec}

We are interested in critical percolation in which $|\C_1|\approx
n^{2/3}$. This occurs in numerous underlying graphs $G$ as listed in
the introduction (examples $1-5$).
Recall the definition of $G_p$, and write $d_{G_p}(x,y)$ for the length of the shortest path between $x$
and $y$ in $G_p$, or $\infty$ if there is no such path. We call $d$
the {\em intrinsic metric} on $G_p$. Define the random sets
\begin{align*}
\bcr_p(x,r;G) = \{ u : d_{G_{p}}(x,u) \leq r \} \, , \quad \partial
\bcr_p(x,r;G) = \{ u : d_{G_{p}}(x,u) = r \} \, ,
\end{align*}
and the event
$H_p(x,r;G) = \Big \{ \partial B_p(x,r;G) \neq \emptyset \Big \} \, . $
 Finally, define
$$\Gamma_p(x,r; G)=\sup_{G' \subset G } \P (H_p(x,r;G')) \, ,
$$
where $\P$ here is the percolation probability measure over subgraphs of $G'$. The reason for taking a supremum in the definition of $\Gamma_p$ is that the event $H_p(x,r; G)$ is {\em not} monotone with respect to edge addition (indeed, adding an edge can potentially shorten a shortest path and make $\partial
\bcr_p(x,r;G)$ empty even if it were not empty before). The quantity $\Gamma_p$ is called the intrinsic metric {\em arm exponents} and was introduced in \cite{NP1}, see Theorem $2.1$ of that paper for further details there.

\begin{theorem}\label{criticalgraphs} Let $G=(V,E)$ be a graph and let $p \in [0,1]$. Suppose that for some constants $c_1, c_2>0$ and all vertices $x \in V$ the following two conditions are satisfied:
$$
(i) \,\, \E |\bcr_p(x,r;G)| \leq c_1 r \, , \qquad (ii) \,\, \Gamma_p(x,r; G) \leq c_2/r \, .
$$
Then for any $\beta,\delta>0$ there exists $B > 0$ such that
$$ \P \Big ( \exists \C \mbox{ with } |\C|\geq \beta n^{2/3} \mbox{ and } \tcov(\C) \not \in [B^{-1} n, Bn] \Big ) \leq \delta \, .$$
\end{theorem}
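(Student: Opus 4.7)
The strategy is to apply Theorem \ref{thm-cover} for the upper bound and the commute-time identity $\E_x T_y + \E_y T_x = 2|E|\res(x,y)$ for the lower bound. Both rely on showing that with probability at least $1-\delta$, every cluster $\C$ satisfying $|\C|\geq \beta n^{2/3}$ simultaneously enjoys three geometric properties: (P1) $|V(\C)|,|E(\C)| \asymp n^{2/3}$; (P2) intrinsic and effective-resistance diameters of order $n^{1/3}$; and (P3) a resistance-metric covering bound $|A_i(\C)| \leq B\cdot 4^i$ for all $i \leq \log_2 \log n$.

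I would first derive (P1) by combining (i) and (ii) into the Kesten-style tail bound $\P(|\C(v)|\geq k) \leq 2\sqrt{c_1 c_2/k}$ (obtained by splitting on whether the intrinsic radius exceeds some $r$, bounding each piece via (i) or (ii), and optimizing in $r$). Markov's inequality summed over $v$ then rules out clusters of size $\geq B n^{2/3}$, and a standard path-counting second moment argument using (i) controls the surplus $|E(\C)|-|V(\C)|+1$. For the lower half of (P2), Markov applied directly to (i) shows that a cluster of intrinsic diameter $r$ would satisfy $|\C|\leq |B_p(v,r;G)|$ with expected value $\leq c_1 r$, forcing $r\gtrsim n^{1/3}$ when $|\C|\geq \beta n^{2/3}$. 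The upper half of (P2) is the most delicate classical input and goes through the arm estimate (ii) combined with an exploration of the cluster, in the spirit of \cite{NP1}.

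Property (P3), together with the identification of resistance and intrinsic diameters needed in (P2), reduces to establishing that intrinsic balls of radius $r$ inside $\C$ have volume at least $cr^2$, up to saturation at $|V(\C)|$. This conditional second-moment estimate is the key geometric ingredient: combined with the inclusion $\ball(v,r)\supseteq B_p(v,r;G)$, a greedy packing gives $|A_i(\C)| \leq |V(\C)|/(c(R/2^i)^2) \lesssim 4^i$, and the same ball-volume bound simultaneously rules out the many-short-cycle configurations that would cause resistance distance to be much smaller than intrinsic distance, so that $\res(u,v) \asymp d_{G_p}(u,v)$ inside $\C$.

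Granting (P1)--(P3), Theorem \ref{thm-cover} gives $\alpha_i \leq 2^{-i}\log(B\cdot 4^i) \lesssim i/2^i$, so $(\sum_i\sqrt{\alpha_i})^2 = O(1)$ and
\[ \tcov(\C) \;\leq\; C\cdot R\cdot |E(\C)| \;\lesssim\; n^{1/3}\cdot n^{2/3} \;=\; Bn. \]
For the matching lower bound, pick $x,y\in \C$ with $\res(x,y)\geq \diameff(\C)/2 \gtrsim n^{1/3}$; the commute-time identity yields $\tcov(\C) \geq \max(\E_x T_y,\E_y T_x)\geq |E(\C)|\res(x,y) \geq B^{-1} n$. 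The main obstacle is the $r^2$ lower bound on intrinsic ball volumes needed for (P3) and for the resistance-intrinsic comparison, since (i) gives only the matching upper bound in expectation; transferring this into a uniform lower bound valid inside $\C_1$ requires both the size bias induced by conditioning on $|\C|\geq \beta n^{2/3}$ and the arm estimate (ii) to rule out sparse bottlenecks.
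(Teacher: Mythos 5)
Your overall architecture is right: apply Theorem \ref{thm-cover} for the upper bound, use the commute-time identity plus a resistance-diameter lower bound for the lower bound, and reduce everything to a high-probability geometric event. Your items (P1) and (P2) match the paper (the paper imports these from \cite{NP1}: Theorem~2.1, equations (3.1) and (3.3), and Proposition~5.6 with Nash-Williams for the resistance-diameter lower bound), and your lower bound via commute time is essentially the same as the paper's appeal to the maximal hitting time in \cite{NP1}.

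The gap is in how you propose to establish the covering bound (P3). You want to show a uniform \emph{lower} bound $|B_p(v,r;\C)|\gtrsim r^2$ for all $v\in\C$ and use a greedy packing. This is problematic for two reasons. First, hypotheses (i) and (ii) only give an \emph{upper} bound on expected ball volume and an upper bound on the arm probability; they do not yield a uniform quadratic lower bound on intrinsic ball volumes, and such a bound is in fact false for all centers simultaneously (vertices near the tips of dangling trees have much smaller balls). The appeal to ``size bias from conditioning'' plus (ii) ``ruling out sparse bottlenecks'' is not a proof and is where the argument would break down. Second, and relatedly, the bound $|A_i|\lesssim 4^i$ you are aiming for is stronger than necessary: Theorem \ref{thm-cover} only needs $\sum_i\sqrt{\alpha_i}=O(1)$, which is already implied by the much weaker $|A_i|\le e^{m2^{i/2}}$. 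The paper exploits exactly this slack: it constructs a covering set $A'_i$ directly from ``$i$-good'' vertices on a carefully chosen sequence of intrinsic spheres $\partial B_p(v,r_j;G)$, where $r_j$ is selected so that $\E\,\partial B_p(v,r_j;G)$ is controlled via (i), and then the fraction of boundary vertices that extend a further $n^{1/3}2^{-i}/D$ into the cluster is controlled by the arm estimate (ii) together with the Markov property of the exploration. This yields $\E|A'_i|=O(2^{2i}n^{-1/3})$ unconditionally, after which Markov's inequality and a union bound over $i$ give $|A'_i|\le e^{m2^{i/2}}$ for all $i$ with probability $1-O(\delta' n^{-1/3})$, enough to conclude. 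So the route through a pointwise ball-volume lower bound should be replaced by the shell/good-vertex construction, which works directly with the given first-moment hypotheses rather than the (unavailable) second-moment/lower-bound information. Similarly, you do not need the two-sided comparison $\res\asymp d$ inside $\C$ at any point; the one-sided $\res\le d$ plus the Nash-Williams lower bound on the resistance diameter suffice.
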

\begin{proof}
The fact that there exists $B>0$ such that
$$ \P \Big ( \exists \C \mbox{ with } |\C|\geq \beta n^{2/3} \mbox{ and } \tcov(\C) \leq B^{-1} n \Big ) \leq \delta/2 \, ,$$
follows immediately from the corresponding lower bound on the maximal hitting time, see part (c.2) of Theorem $2.1$ of \cite{NP1} and Lemma $4.1$ in that paper. Also from \cite{NP1} we have that for any
$\beta, \delta'>0$ there exists $D = D(\beta, \delta')>0$ such
that \be\label{diambound} \P \Big ( |\C(v)|\geq \beta n^{2/3} \mbox{
and } \diam(\C(v)) \not \in [D^{-1} n^{1/3}, Dn^{1/3}] \Big ) \leq
\delta' n^{-1/3}  \, .\ee
To see this, combine (3.1) and (3.3) of \cite{NP1}. Denote $\diameff(\C(v))$ for the diameter of
$\C(v)$ according to the resistance metric. We first show that with
high probability components of size $n^{2/3}$ have $\diameff$ of
order $n^{1/3}$. Indeed, the upper bound follows immediately from
Theorem 2.1 of \cite{NP1} and the fact that $\res(x,y)\leq d(x,y)$.
For the lower bound, we use Proposition 5.6 of \cite{NP1}, the
Nash-Williams inequality and (\ref{diambound}) to deduce that for
large enough $D = D(\beta, \delta')>0$ we have
\begin{equation}\label{eq-resistance-diam} \P \big ( |\C(v)| \geq \beta n^{2/3} \mbox{ and }
\diameff(\C(v)) \leq D^{-1}n^{1/3} \big ) \leq \delta' n^{-1/3} \,
.\end{equation}

We now proceed to construct covering sets of $G$ on different
scales. Fix an integer $i \geq 0$ and we define a sequence of radii
$\{r_j\}_{j \leq 2D^2 2^i}$ which have the following properties:
\begin{eqnarray*} (a) \,\,r_0 = 0 \, , \quad (b)\,\, {(j-1/2)n^{1/3} \over 2 D 2^i} \leq r_j \leq {jn^{1/3} \over 2 D 2^i} \, , \quad (c) \,\, \E \partial \bcr_p(v,r_j;G)  \leq 4D^2 c_1 2^i \, .\end{eqnarray*}
This is possible by condition (i) of the theorem, which implies that
for each $j \leq 2D^2 2^i$
$$ \sum _{\ell={(j-1/2)n^{1/3}/(2D 2^{i}) }}^{{jn^{1/3}/(2D 2^{i})}} \E \partial \bcr_p(v,\ell;G) \leq c_1D n^{1/3}\, ,$$
and so there must exists $\ell \in [(j-1/2)n^{1/3}/(2D 2^{i}),
jn^{1/3}/(2D 2^{i})]$ such that $r_j=\ell$ satisfies condition (c).
Given such radii $\{r_j\}$ we say that a vertex $u \in \partial
\bcr_p(v,r_j;G)$ is {\em $i$-good} if there exist a path between
$u$ and $\partial \bcr_p(v,r_{j+1};G)$ which does not go through
$\bcr_p(v,r_j;G)$. We now construct a sequence of sets $\{A'_i\}$ which will serve as a covering. Define
$$ A'_i = \bigcup_{j \leq 2D^2 2^i} \big \{ u \in \partial \bcr_p(v,r_j;G) \, : \, u \mbox{ is } \mbox{$i$-good} \big \} \, .$$
Observe that if $\diam(\C(v))\leq Dn^{1/3}$ then we have that
$$\C(v) \subset \bigcup_{u \in A'_i} \bcr_p \big (u, {2^{-i} D^{-1}n^{1/3}}; G \big )\, .$$ Furthermore, if in addition $R=\diameff(\C(v))\geq D^{-1} n^{1/3}$, then we
have that
$\C(v) \subset \bigcup_{u \in A'_i} \bcr_p \big (u,  2^{-i} R; G\big ) $. Given these two events and the fact that $\bcr_p \big (u,r; G)
\subset \ball(u,r; \C(v))$, we deduce that
\[\C(v) \subset \bigcup_{u \in A'_i}  B_{\bf{eff}}\big (u,  2^{-i} R ; \C(v) \big )
\,,\] and therefore $|A_i| = |A_i(\C(v))| \leq |A'_i|$ for all $i\in
\N$ (see (\ref{def-a-cov})). By (\ref{diambound}) and (\ref{eq-resistance-diam}), we get that
\begin{equation}\label{eq-A-A'}
\P(|\C(v)| \geq \beta n^{2/3}, \exists i \in \N: |A'_i|< |A_i|)
\leq 2\delta' n^{-1/3}\,.
\end{equation}

Now, by condition (ii) of our theorem and our construction of
$\{r_j\}$ we get that
$$ \E|A'_i| \leq \sum _{j \leq 2D^2 2^i} \E \partial \bcr_p(v,r_j;G) \cdot {4D c_2 2^i \over n^{1/3}} \leq 16 D^3 c_1 c_2 2^{2i} n^{-1/3} \, .$$
So we can choose a large integer $m=m(c_1, c_2, D, \delta')$ such
that
\be\label{fromhere} \sum _{i=1}^\infty \P\big(|A'_i| \geq  \mathrm{e}^{m \cdot 2^{i/2}}\big) \leq  \sum_{i=1}^\infty  \frac{ 16 D^3c_1 c_2 2^{2i} n^{-1/3}}{ \mathrm{e}^{m \cdot 2^{i/2}}} \leq  \delta'  n^{-1/3}\, .\ee
Recalling that (see Theorem~\ref{thm-cover}) $\alpha_i  = 2^{-i}\log
|A_i|$ and combining the above estimate with \eqref{eq-A-A'}, we
obtain that
\begin{align}\label{eq-alpha-s}\P\big(|\C(v)| \geq \beta n^{2/3}, \mbox{$\sum_{i=1}^\infty$} \sqrt{\alpha_i} \geq
4m \big) \leq& \P\big(|\C(v)| \geq \beta n^{2/3}, \exists i\in\N :
|A'_i| < |A_i| \big)\nonumber \\
& + \sum _{i=1}^\infty \P\big(|A'_i| \geq \mathrm{e}^{m \cdot
2^{i/2}}\big) \leq 3 \delta' n ^{-1/3}\,.\end{align}
We say that $\C(v)$ is {\em bad} if $|\C(v)| \geq
\beta n^{2/3}$ and one of the following holds:
\begin{itemize}
\item $\sum_{i=1}^\infty \sqrt{\alpha_i} \geq 4m$, or
\item $\diameff(\C(v)) \geq D n ^{1/3}$, or 
\item $|E(\C(v))| \geq Dn^{2/3}$. 
\end{itemize}
By \eqref{eq-alpha-s} and Theorem $2.1$ of \cite{NP1} we learn that we can choose $D$ large enough so that the probability that $\C(v)$ is bad is at most $5\delta'n^{-1/3}$, whence $\E X \leq 5\delta' n^{2/3}$. Note that if there exists $v$ such that $\C(v)$ is bad, then $X \geq \beta n^{2/3}$.  By
Theorem \ref{thm-cover} we learn that there exists some large
constant $B=B(D,m)$ such that if $|\C(v)|\geq \beta n^{2/3}$ and $\tcov(\C(v)) \geq Bn$, then $\C(v)$ is bad (taking $B=16 C m^2 D^2$, where $C$ is the constant of Theorem \ref{thm-cover} suffices). Hence, by Markov's inequality
$$ \P \Big ( \exists \C \mbox{ with } |\C|\geq \beta n^{2/3} \mbox{ and } \tcov(\C) \geq Bn \Big ) \leq \P( X \geq \beta n^{2/3}) \leq 5\delta'/\beta \, ,$$
which concludes the proof of the theorem by setting $\delta' =
\delta/(10\beta)$. \qed
\end{proof}

\noindent {\bf Proof of Theorem \ref{covercrit}.} We only need to show that the conditions of Theorem \ref{criticalgraphs} holds in examples $1-5$. Indeed, it is shown in \cite{NP1} that the conditions hold for examples $1-3$, and in \cite{KN1} and \cite{KN2} it is shown for examples $4-5$. In \cite{HH, HH0} it is shown for example $5$ that at $p=p_c(\Z^d)$ the largest cluster size is of order $n^{2/3}$. \qed


We will require the following result of Aldous \cite{Aldous}. For the reader's convenience we provide a simpler proof of this theorem based on Theorem \ref{thm-cover}.
\begin{theorem} \label{aldoustree} Let $T$ be a Galton-Watson tree with progeny mean $1$ and variance $\sigma^2<\infty$.
Then for any $\delta>0$ there exists $A=A(\delta, \sigma^2)>0$ such that
$$ \P \big ( \tcov(T) \not \in [A^{-1} k^{3/2}, A k^{3/2}] \, \big | \, |T| \in [k,2k] \big )\leq \delta \, .$$
\end{theorem}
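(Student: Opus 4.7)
The plan is to apply Theorem~\ref{thm-cover} directly to $T$, exploiting two tree-specific facts: resistance distance equals graph distance (so $R = \diam(T)$ and $|E(T)| = |T|-1$), and classical estimates for critical Galton--Watson trees with finite offspring variance give $\P(\text{height} \geq h) \asymp 1/h$ (Kolmogorov) and $\P(|T| \in [k,2k]) \asymp k^{-1/2}$ (Kemperman + local CLT). Standard diameter estimates then yield that, conditional on $|T| \in [k,2k]$, one has $R \in [c\sqrt{k}, C\sqrt{k}]$ with probability at least $1-\delta$, so $R|E| \asymp k^{3/2}$ and the desired bounds reduce respectively to controlling $\sum_i \sqrt{\alpha_i}$ from above and $\max_i \alpha_i$ from below.

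The lower bound is immediate from the KKLV inequality $\tcov \geq \max_i \alpha_i R|E|$ recalled in the introduction: since $|T| \geq 2$ forces $|A_1| \geq 2$, we have $\alpha_1 \geq \tfrac12 \log 2$, and combining with $R \geq c\sqrt{k}$ and $|E| \geq k-1$ gives $\tcov(T) \geq c' k^{3/2}$. For the upper bound, the task is to construct, for each $i \leq \log_2 \log k$, a set $A_i' \subset V(T)$ whose balls at scale $\asymp \sqrt{k}/2^i$ cover $T$ and which satisfies $\E|A_i'| = O(4^i/\sqrt{k})$ unconditionally. Root $T$ at $\rho$, set $h = c\sqrt{k}/(2 \cdot 2^i)$, and call a vertex at depth $jh$ \emph{good} if its descendant subtree reaches depth at least $(j+1)h$; take $A_i' = \{\rho\} \cup \bigcup_{1 \leq j \leq C \cdot 2^i} \{\text{good vertices at depth } jh\}$. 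A short case analysis (using the ancestor at depth $(j-1)h$ if the natural ancestor at depth $jh$ is not good) shows every $w \in T$ lies within graph distance $2h = c\sqrt{k}/2^i$ of $A_i'$, so when $R \geq c\sqrt{k}$ we have $|A_i(T)| \leq |A_i'|$.

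To estimate $\E|A_i'|$, use that subtrees at distinct vertices of a level are independent critical Galton--Watson trees, so each vertex at depth $jh$ is good with probability $\leq C_\sigma/h \asymp 2^i/\sqrt{k}$; combined with $\E Z_{jh} = 1$ and the truncation at $J^* = C \cdot 2^i$ levels, this yields $\E|A_i'| = O(4^i/\sqrt{k})$. Dividing by $\P(|T| \in [k,2k]) \asymp k^{-1/2}$ and applying Markov's inequality gives $\P(|A_i'| \geq e^{m \cdot 2^{i/2}} \mid |T| \in [k,2k]) \lesssim 4^i e^{-m \cdot 2^{i/2}}$, which is summable over $i$ for $m = m(\delta, \sigma^2)$ large. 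On the intersection of these good events with $R \asymp \sqrt{k}$, we get $\alpha_i \leq m \cdot 2^{-i/2}$ and hence $\sum_i \sqrt{\alpha_i} = O(1)$; Theorem~\ref{thm-cover} then delivers $\tcov(T) \leq C'' k^{3/2}$. The main obstacle is the covering construction itself: confirming that the ``good vertex on a grid of levels'' strategy produces both a genuine cover at scale $\sqrt{k}/2^i$ and the correct $4^i/\sqrt{k}$ expected size uniformly for $i \leq \log_2 \log k$, with the truncation chosen to avoid the divergence that arises from the infinite unconditional expectation of $\sum_j Z_{jh}$.
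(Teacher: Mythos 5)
Your proposal follows essentially the same route as the paper's proof: both condition on $\diam(T)\asymp\sqrt{k}$, construct the covering set $A_i'$ from vertices at depths $jh$ (with $h\asymp\sqrt{k}/2^i$) whose descendant subtree survives one further block — exactly the paper's ``particles at level $r_j$ which survive to level $r_{j+1}$'' — bound $\E|A_i'|\lesssim 4^i/\sqrt{k}$ via the Kolmogorov survival estimate $\P(H\geq h)\asymp 1/h$ together with $\E Z_{jh}=1$, and transfer to the conditional law by dividing by $\P(|T|\in[k,2k])\asymp k^{-1/2}$ before summing a geometric tail over $i$. The only cosmetic difference is the lower bound, where you invoke the KKLV form $\max_i\alpha_i\,R|E|\leq\tcov$ via $\alpha_1$ while the paper phrases it as Matthews' bound through the commute-time identity (these are equivalent up to constants); just note that $|A_1|\geq 2$ is not guaranteed for every graph (a path of even length has $|A_1|=1$), but $|A_2|\geq 2$ always holds when the graph has at least two vertices, which yields the same conclusion with a harmlessly smaller constant.
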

\begin{proof} This is very similar to the proof of Theorem \ref{criticalgraphs}. Firstly, we claim that there exists $D>0$ such that
$$ \P \big ( \diam(T) \not \in [D^{-1} k^{1/2}, Dk^{1/2}] \, , |T| \in [k,2k] \big ) \leq k^{-1/2} \delta /2 \, .$$
Indeed, it is a classical fact \cite{KSN} that $\P(\diam(T) \geq Dk^{1/2}) = O(D^{-1}k^{-1/2})$. Furthermore, the expected number of particles in $T$ up to level $D^{-1} k^{1/2}$ is precisely $D^{-1} k^{1/2}$, and the event $\{\diam(T)\leq D^{-1}k^{1/2} \, , |T| \geq k\}$ implies that this quantity is at least $k$. Hence by Markov's inequality we have that $\P(\diam(T) \leq D^{-1} k^{1/2}, |T| \geq k) \leq D^{-1} k^{-1/2}$.

Now, for each $i$ we define $r_j = j 2^{-i-1} D^{-1} \sqrt{k}$ for $j=0, \ldots, 2^{i+1} D^2$ and define $A_i'$ to be the set of particles at level $r_j$ which survive up to level $r_{j+1}$. As in the proof of Theorem \ref{criticalgraphs}, if $\diam(T) \in [D^{-1} k^{1/2}, Dk^{1/2}]$, then
$$ T \subset \bigcup_{u \in A'_i}  B_{\bf{eff}}\big (u,  2^{-i} R ; T \big ) \, ,$$
where $R$ is the diameter of $T$ with respect to the resistance metric. Now, for each $j$ the expected number of particles in level $r_j$ is precisely $1$ and for each, the probability of surviving up to level $r_{j+1}$ is of order $(r_{j+1}-r_j)^{-1}$ (see \cite{KSN} again), hence $\E |A_i'| \leq C 2^{2i+2} D^3 k^{-1/2}$  and the proof continues as in (\ref{fromhere}) to show using Theorem \ref{thm-cover} that there exists $A$ such that
$$ \P \big ( \tcov(T) \geq A k^{3/2} \, , |T| \in [k,2k] \big ) \leq k^{-1/2} \delta/2 \, .$$
Let $L$ be the offspring random variable of $T$. We have that $|T|$ is distributed as the first hitting time of $0$ of a random walk starting $1$ with increments distributed as $L-1$ (see exercise $5.26$ of \cite{LP}). We use this and Theorem 1a of chapter XII.7 in \cite{Feller2} to deduce that
$$ \P(|T|\in[k,2k]) = (1+o(1))Ck^{1/2} \, ,$$ for some constant $C>0$.
This gives the required upper bound on the cover time. The corresponding lower bound follows immediately from the lower bound on the maximal hitting time, which we obtain via the $\sqrt{k}$ lower bound on the diameter of $T$ together with commute time identity. \qed \\ \end{proof}

\noindent {\bf Proof of part (a) and (b) of Theorem \ref{evolution}.}
Part (b) of the theorem follows immediately from Theorem \ref{criticalgraphs}, so we are only left to prove part (a). In this case it is known that the largest cluster is a uniform random tree of order $\eps^{-2} \log(\eps^3 n)$ (see \cite{Jansonbook}). It is a classical fact (see chapter $2.2$ of \cite{Kolchin}) that a uniform random tree of size $k$ is distributed as a Poisson($1$) Galton-Watson tree $T$ conditioned on $|T|=k$. Hence the following statement concludes the proof: let $T$ be a Poisson($1$) Galton-Watson tree, then for any $\delta>0$ there exists $A>0$ such that
\be \label{aldouswinkler} \P \big ( \tcov(T) \not \in [A^{-1} k^{3/2}, A k^{3/2}] \, \big | \, |T| = k \big ) \leq \delta \, .\ee
Note that this assertion does not immediately follow from Theorem \ref{aldoustree}. To fill in the gap, we will infer from a result Luczak and Winkler \cite{LW}, that there exists a coupling between a random tree $T_k$ of size $k$ and a random tree $T_{k+1}$ of size $k+1$ such that $T_k \subset T_{k+1}$. This together with Theorem \ref{aldoustree} shows the the upper bound on the cover time of (\ref{aldouswinkler}) and concludes the proof (the lower bound on the cover time is easier and follows, as in the remark above, by the easy lower bound on the maximal hitting time).

To see that such a coupling exists write $T^{(d)}_k$ for a Bin($d,1/d$) Galton-Watson tree conditioned on being of size $k$. Theorem $4.1$ in \cite{LW} shows that there exists a coupling between $T^{(d)}_k$ and $T^{(d)}_{k+1}$ such that $T^{(d)}_k \subset T^{(d)}_{k+1}$. Now, for any fixed $k$ we may take $d \to \infty$ and we get the required coupling between Poisson($1$) Galton-Watson trees. This concludes our coupling since the latter trees are uniform random trees. \qed

\section{Cover time for mildly supercritical Erd\H{o}s-R\'enyi graph}

In this section, we prove Part (c) of Theorem~\ref{evolution}, which
incorporates the order of the cover time for the largest component
of Erd\H{o}s-R\'enyi graph $G(n, p)$ with $p = \frac{1+\eps}{n}$,
where $\eps = o(1)$ and $\eps^3 n \to \infty$. Our proof makes
use of the following structure result of \cite{DKLP1}.

\begin{theorem}\cite{DKLP1}\label{mainthm-struct-gen}
Let $\GC$ be the largest component of $G(n,p)$ for $p = \frac{1 +
\eps}{n}$, where $\eps^3 n\to \infty$ and $\eps\to 0$. Let
$\mu<1$ denote the conjugate of $1+\eps$, that is,
$\mu\mathrm{e}^{-\mu} = (1+\eps) \mathrm{e}^{-(1+\eps)}$. Then
$\GC$ is contiguous to the model $\tGC$ constructed in the following
3 steps:
\begin{enumerate}
  \item[(a)]\label{item-struct-gen-degrees} Let $\Lambda\sim \mathcal{N}\left(1+\eps - \mu, \frac1{\eps n}\right)$ and assign
  i.i.d.\  variables $D_u \sim \mathrm{Poisson}(\Lambda)$ ($u \in [n]$) to the vertices, conditioned that $\sum D_u \one_{D_u\geq 3}$ is even.
 Let $N_k = \#\{u : D_u = k\}$ and $N= \sum_{k\geq 3}N_k$. Select a random graph $\K$ on $N$ vertices, uniformly among all graphs with $N_k$ vertices of degree $k$ for $k\geq 3$.
  \item[(b)]\label{item-struct-gen-edges} Replace the edges of $\K$ by paths of lengths i.i.d.\ $\mathrm{Geom}(1-\mu)$. 
  \item[(c)]\label{item-struct-gen-bushes} Attach an independent $\mathrm{Poisson}(\mu)$-Galton-Watson tree (PGW tree in what follows) to each vertex.
\end{enumerate}
That is, $\P(\tGC \in \mathcal{A}) \to 0$ implies $\P(\GC \in
\mathcal{A}) \to 0$ for any set of graphs $\mathcal{A}$.
\end{theorem}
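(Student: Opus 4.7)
The plan is to establish the contiguity via a three-step explicit construction matching items (a)--(c) of the theorem, showing that each feature of the true giant $\GC$ coincides (up to total variation) with its counterpart in $\tGC$. Throughout, I would rely on the Poisson duality principle: the supercritical parameter $1+\eps$ has a subcritical conjugate $\mu<1$ satisfying $\mu e^{-\mu}=(1+\eps)e^{-(1+\eps)}$, and every subcritical feature appearing in (b)--(c) will be derived from this duality.

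First I would analyze the $2$-core $\GC^{(2)}$ of the giant. In the regime $\eps^3 n\to\infty$ with $\eps\to 0$, classical results of \L uczak (refined to this precise range by Pittel--Wormald) show that $\GC^{(2)}$ has $(1+o(1))\,2\eps^2 n$ vertices with Gaussian fluctuations of order $\sqrt{n/\eps}$, and after contracting all maximal paths of degree-$2$ vertices one obtains a multigraph $\K$ (the kernel) with minimum degree $3$. Next, apply Poisson--subcritical duality (Bollob\'as, refined by Janson--\L uczak): conditional on $\GC^{(2)}$, the complement $\GC\setminus \GC^{(2)}$ consists of i.i.d.\ PGW$(\mu)$ trees attached to $2$-core vertices, justifying step (c); likewise, each maximal path of degree-$2$ vertices in $\GC^{(2)}$ has length distributed as $\mathrm{Geom}(1-\mu)$ independently across paths, since at each step along such a path the conditional probability of continuing rather than hitting a kernel vertex equals $\mu$. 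This justifies step (b). For step (a), once the degree sequence $(N_k)_{k\ge 3}$ is fixed, exchangeability of the edges of $G(n,p)$ among candidate multigraphs forces the conditional law of $\K$ to be uniform over multigraphs with the prescribed degrees.

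The main obstacle is the precise joint CLT describing the kernel degree sequence together with the Gaussian randomization $\Lambda\sim\mathcal{N}(1+\eps-\mu,\,1/(\eps n))$ that appears in (a). The variance $1/(\eps n)$ in $\Lambda$ reflects that the surplus (cyclomatic number) of the giant has fluctuations of order $\sqrt{\eps n}$, which induces mean-degree fluctuations of order $1/\sqrt{\eps n}$ inside the $2$-core; the parity conditioning on $\sum_u D_u\one_{D_u\geq 3}$ being even encodes that the total kernel degree must be even. Establishing this CLT requires a local limit analysis of the exploration process for the giant, using the hitting-time representation of $|\GC|$ as a near-critical random-walk excursion, together with a Poisson-thinning argument that identifies the empirical degree profile of $\GC^{(2)}$ as a random sample from a Poisson law with a Gaussian-perturbed mean. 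Once this is in place, the contiguity of $\GC$ and $\tGC$ reduces to a Radon--Nikodym comparison on the finite-dimensional summary $((N_k)_{k\ge 3},\Lambda)$; the remaining ingredients (path lengths and attached trees) are conditionally i.i.d.\ in both models and match by construction, so any event negligible under $\tGC$ is also negligible under $\GC$.
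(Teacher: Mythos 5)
First, note that the paper you are working from does not prove this statement at all: Theorem \ref{mainthm-struct-gen} is imported verbatim from \cite{DKLP1} (``Anatomy of a young giant component''), so there is no internal proof to compare against; your proposal has to be judged as a self-contained argument, and as such it is a plan in the spirit of \cite{DKLP1} rather than a proof.

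The central gap is that you treat as exact distributional identities what are in truth only contiguity statements --- and proving those contiguity statements is precisely the hard content of the theorem. Conditionally on the $2$-core $\TC$ (say with kernel $\K$, a prescribed number of degree-$2$ vertices, and a prescribed number of vertices outside the $2$-core of $\GC$), the maximal paths of degree-$2$ vertices are \emph{not} i.i.d.\ $\mathrm{Geom}(1-\mu)$: their lengths are constrained to a fixed total, so their joint law is that of a uniformly random composition, and similarly the attached trees are \emph{not} i.i.d.\ PGW$(\mu)$ trees but a uniformly random forest with given roots and a given total number of vertices. Your justification (``at each step the conditional probability of continuing equals $\mu$'') is a heuristic that holds in the randomized model $\tGC$, not in $\GC$ conditioned on its $2$-core data; the whole point of introducing the Gaussian randomization $\Lambda\sim\mathcal{N}(1+\eps-\mu,1/(\eps n))$ and the parity conditioning is to build a model whose \emph{joint} law of (kernel degree profile, number of degree-$2$ vertices, forest size) matches that of $\GC$ closely enough, via local central limit theorems with the correct $\sqrt{n/\eps}$-scale fluctuations, that i.i.d.\ geometrics and i.i.d.\ PGW trees become contiguous to the constrained uniform objects. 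You explicitly defer exactly this step (``establishing this CLT requires a local limit analysis \dots''), but without it nothing is proved: the identification of the variance $1/(\eps n)$, the simultaneous control of $N=\sum_{k\ge3}N_k$ and $\sum_k kN_k$, and the Radon--Nikodym comparison all live inside that missing local CLT, which in \cite{DKLP1} rests on nontrivial enumeration/limit results (Pittel--Wormald, {\L}uczak) rather than on a soft ``Poisson-thinning'' argument. The uniformity of $\K$ given its degree sequence is also asserted in one line; it is true, but it requires the counting symmetry argument, not just ``exchangeability of edges.'' In short, the skeleton is the right one, but the steps you state as exact are false as stated, and the step that carries the actual difficulty is left unexecuted.
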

By the above theorem, it suffices to analyze the cover time of
$\tGC$. In what follows, we will repeatedly use some known facts
about $\tGC$ and one can see \cite{DKLP1, DKLP2} for references.

\subsection{Lower bound}
We first show that w.h.p.~there are $(\eps^3 n)^{1/4}$ attached
trees, as in part (c) of the construction of $\tGC$, of height at least $\frac{1}{2}\eps^{-1} \log (\eps^3 n)$. To this end,  note that the height $H$ of a PGW($\mu$) tree satisfies the following for some constant $c>0$ (see, e.g.,
\cite{DKLP2}*{Lemma 4.2})
\be\label{treebound}\P \big (H \geq \tfrac{1}{2} \eps^{-1} \log(\eps^3 n) \big) \geq c \eps (\eps^3 n)^{-1/2 + o(1)}\,, \ee
where we used the fact that  $\mu = (1 - (1+o(1)) \eps)$. It is an immediate consequence of parts (a) and (b) of the construction of $\tGC$ that w.h.p.~there are $(2+o(1)) \eps^2 n$ i.i.d.\ attached PGW($\mu$)
trees. Hence, by (\ref{treebound}), we learn that with high probability there are at least $(\eps^3 n)^{1/4}$ PGW trees of height at least $\tfrac{1}{2} \eps^{-1} \log(\eps^3 n)$. Now, take exactly one leaf in the bottom level from each of these trees and denote by $B$ the set of these leaves.  We will use the following lemma (see, e.g., \cite{Tetali}, and also see \cite{LP}*{Proposition 2.19}) to bound the hitting time between vertices in $B$.
\begin{lemma}\label{lem-network-hitting-time}
Given a finite network with a vertex $v$ and a subset of
vertices $Z$ such that $v\not \in Z$. Let $vol(\cdot)$ be the voltage when a unit current
flows from $v$ to $Z$ and $vol(Z) = 0$. Then we have that $\E_v[\tau_Z]
=\sum_{x \in V} c(x) vol(x)$, where $c(x) = \sum_{x \sim y} c(x, y)$
and $c(x, y)$ is the conductance between $(x, y)$.
\end{lemma}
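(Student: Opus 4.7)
The plan is to express $\E_v[\tau_Z]$ as a sum over vertices of Green's function values, and then to identify the normalized Green's function with the voltage $vol$. Write $G(v,x)$ for the expected number of visits to $x$ by the random walk started at $v$, counted strictly before the hitting time $\tau_Z$. Because $\tau_Z$ is just the total number of such visits summed over the state space, we get $\E_v[\tau_Z] = \sum_{x \in V} G(v,x)$, so the lemma reduces to the pointwise identity
\[
G(v,x) = c(x)\, vol(x), \qquad x \in V.
\]

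To establish this identity I would set $\psi(x) := G(v,x)/c(x)$ and show that $\psi$ solves the same boundary value problem as $vol$. Three properties must be checked: (a) $\psi \equiv 0$ on $Z$, which is immediate from the definition of $\tau_Z$; (b) $\psi$ is harmonic at every $x \notin Z \cup \{v\}$, which I would derive by combining the first-step recurrence $G(x,v) = \sum_y P(x,y) G(y,v)$ with the standard reversibility identity $G(v,x)/c(x) = G(x,v)/c(v)$; (c) the net electrical current out of $v$ in the $\psi$-voltage picture, namely $\sum_y c(v,y)(\psi(v) - \psi(y))$, equals $1$. Property (c) is the decisive computation and will follow from the remaining first-step recurrence $G(v,v) = 1 + \sum_y P(v,y) G(y,v)$: after substituting the definition of $\psi$ and applying reversibility, the sum collapses to the constant $1$ contributed by the initial visit at time $0$. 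Uniqueness for this boundary value problem is then standard: if two functions satisfy (a)--(c), their difference has zero flux at $v$ (hence is harmonic there as well), is harmonic on all of $V \setminus Z$, and vanishes on $Z$, so by the maximum principle it is identically zero. Therefore $\psi = vol$.

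Combining $\psi = vol$ with the reduction from the first paragraph gives
\[
\E_v[\tau_Z] \;=\; \sum_{x \in V} G(v,x) \;=\; \sum_{x \in V} c(x)\, \psi(x) \;=\; \sum_{x \in V} c(x)\, vol(x),
\]
which is the claim.

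The main obstacle will be the careful bookkeeping in part (c): one must track the difference between the initial visit to $v$ at time $0$ and the contribution of returns to $v$ encoded by the first-step recursion. It is precisely this off-by-one discrepancy that produces the unit source at $v$ matching the unit current in the definition of $vol$; a sign or normalization slip at this point would yield a constant-factor error rather than the correct identity. Everything else (reversibility, harmonicity off the source/sink set, and the uniqueness argument) is routine once the flux computation is pinned down.
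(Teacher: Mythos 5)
Your proof is correct. The paper does not actually prove Lemma~\ref{lem-network-hitting-time} but simply cites it (Tetali and Lyons--Peres), and your argument --- writing $\E_v[\tau_Z]=\sum_x G(v,x)$ for the Green's function of the walk killed on $Z$, identifying $G(v,x)/c(x)$ with the voltage via the reversibility relation $c(v)G(v,x)=c(x)G(x,v)$, first-step recursion (harmonicity off $\{v\}\cup Z$ and unit outflow at $v$ from the extra initial visit), and uniqueness for the Dirichlet problem --- is precisely the standard derivation used in those references.
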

In our setting, $c(x, y) = 1$ if $(x, y)$ is an edge of
$\tGC$, and otherwise $c(x, y) = 0$. Let $u, v\in B$, and let $T(v)$
be the attached PGW tree that contains $v$. It is clear that for all
$w\not\in T(v)$ the effective resistance between $w$ and $v$
satisfies $R_{\bf{eff}}(w, v) \geq  (2\eps)^{-1} \log(\eps^3 n)$.
Now, if a unit current flows from $u$ to $v$
and the voltage at $v$ is set to be $0$, we can then deduce that the
voltage at vertex $w$ is at least $(2\eps)^{-1} \log(\eps^3 n)$,
for all $w\not\in T(v)$. Note that w.h.p.\ simultaneously for all
$v\in B$ we have $|\tGC\setminus T(v)| = (2+o(1))\eps n$ (see
\cite{DKLP1}) and we then assume this. Lemma
~\ref{lem-network-hitting-time} then yields that for all $u, v\in B$
\[\E_{u} \tau_v \geq  (2\eps)^{-1} \log(\eps^3 n) (2+o(1))
\eps n = (1+o(1))  n \log (\eps^3 n)\,.\] At this point, an
application of the Matthews lower bound \cite{Matthews} (see also, e.g.,
\cite{LPW}) stating that for any subset $A \subset G$ we have
$\tcov(G) \geq \log|A| \min_{u,v\in A} \E_u\tau_v$,
completes the proof of the lower bound. \qed

\subsection{Upper bound}
In this section we establish the upper bound on the cover time. In
light of Theorem~\ref{thm-cover}, it suffices to show that w.h.p.\
for $\tGC$ we have that $|A_i| \leq (\eps^3 n)^{ 2i} $
simultaneously for all $i\geq 1$. Let $R$ be the diameter of $\tGC$
in resistance metric. As shown in \cite{DKLP2}, with high
probability the diameter in graph metric is $(3+o(1))\eps^{-1}
\log (\eps^3 n)$ and also the two highest attached trees have
height $(1+o(1))\eps^{-1} \log (\eps^3 n)$ each. It implies that
$(2+o(1))\eps^{-1} \log (\eps^3 n) \leq R \leq (3+o(1))
\eps^{-1} \log (\eps^3 n)$ w.h.p., and we assume this in what
follows.

Fix $i\in \mathbb{N}$, we now construct $A'_i$ such that balls of radius
$2^{-i}R$ around vertices in $A'_i$ form a covering of
$\tilde{C}_1$. We first cover the 2-core $\mathcal{H}$ of $\tGC$ by
balls of radius $2^{-(i+1)} R$. To this end, consider the disjoint
balls of radius $2^{-(i+2)}R$ that can be packed in $\mathcal{H}$.
Take such a maximal packing and denote by $A'_{i, 1}$ the set of
these centers. Since the packing is maximal, we have that
\[\mathcal{H} \subseteq \bigcup_{v\in A'_{i,1}} \ball(v, 2^{-(i+1)} R)\,.\]
Since $\res(x,y)\leq d(x,y)$, it follows that $|\ball(v, 2^{-(i+2)} R) \cap \mathcal{H}| \geq 2^{-(i+2)}R$ for all $v\in A'_{i,1}$. Therefore, since the balls $\ball(v, 2^{-(i+2)} R)$ for $v\in A'_{i,1}$ are disjoint, we conclude that $|A'_{i,1}| \leq 4 \cdot 2^i|\mathcal{H}|/R$.

We now turn to cover the attached trees. For a rooted tree $T$, let
$H(T)$ be the height of $T$. For $v\in T$, denote by $T_v$ the
subtree of $T$ rooted at $v$ that contains all the descendants of
$v$. Also, denote by $L_k$ the vertices in level $k 2^{-(i+1)} R$ of
$T$. Define \[F_T \deq \cup_{k=1}^\infty\{v\in L_k: H(T_v) \geq
2^{-(i+1)} R\}\,. \]  Let $\mathcal{T}$ be the collection of
attached PGW trees in $\tGC$ and let $A'_{i,2} = \cup_{T \in
\mathcal{T}} F_T$. Defining $A'_i = A'_{i,1} \cup A'_{i,2}$, we
deduce from the definition that
$\tGC \subseteq \bigcup_{v\in A'_i} \ball(v, 2^{-i}R)$.
It remains to bound $|A'_{i,2}|$. Using \cite{DKLP2}*{Lemma 4.2}
again, we obtain that for a PGW$(\mu)$ tree $T$ and some absolute
constant $C$,
\be\label{estimate}\P(H(T) \geq 2^{-(i+1)} R) \leq \begin{cases}
 C \eps\,, &\mbox{ if } 2^{i} \leq \log(\eps^3 n)\,,\\
\frac{C}{2^{-(i+1)}R}\,, & \mbox{ if } 2^{i} \geq \log(\eps^3
n)\,.
\end{cases}
\ee
Also, it is immediate that $\E[|L_k|] = \mu^{k2^{-(i+1)} R}$. Furthermore, by the Markov property, given $|L_k|$ the set $\{T_v: v\in L_k\}$ is distributed as $|L_k|$ independent copies of $T$. By this and (\ref{estimate}) we get that for some absolute constant $C>0$
\begin{align*}\E[F_T]& = \sum_{k\geq 1} \E |\{v \in L_k: H(T_v) \geq 2^{-(i+1)}R\}| = \sum_{k\geq 1} \E [|L_k|] \P(H(T_v \geq 2^{-(i+1)} R)) \\
&\leq \begin{cases} \sum_{k\geq 1} \mu^{k 2^{-i} R /2} \cdot C
\eps \leq C^2
\eps\,, &\mbox{ if } 2^{i} \leq \log(\eps^3 n)\,,\\
\sum_{k\geq 1} \mu^{k 2^{-(i+1)}R} \cdot \frac{C}{2^{-(i+1)}R} \leq
C^2 2^{2i} /R & \mbox{ if } 2^{i} \geq \log(\eps^3 n)\,.
\end{cases}
\end{align*}
Hence, we can always get $\E[F_T] \leq C^2 \eps 2^{2i}$.
Furthermore,  it is known that $|\mathcal{H}|= (2+o(1))\eps^2 n$
with high probability so we may assume this. By Markov's
inequality and the fact that $|A'_{i,1}| \leq 4 \cdot
2^i|\mathcal{H}| /R = o((\eps^3 n)^{ 2i})$ we have that
\begin{align*}\P(|A'_i| \geq (\eps^3 n)^{ 2i}) &= \P(|A'_{i,2}| \geq (\eps^3 n)^{ 2i} -
|A'_{i,1}|) \leq \frac{\E[|A'_{i,2}|]}{(\eps^3 n)^{ 2i}-
|A'_{i,1}|} = \frac{|\mathcal{H}| \E[F_T]}{(1+o(1))(\eps^3 n)^{ 2i}}\\
& \leq (2+o(1)) C^2 \eps^3 n 2^{2i} (\eps^3 n)^{-2i} \leq
o(1)C^2 (\eps^3 n /8)^{-2(i-1)}\,.\end{align*} A simple union
bound gives that with high probability $|A'_i| \leq (\eps^3 n)^{
2i}$ simultaneously for all $i\geq 1$. Recalling the facts that
$|E(\tGC)| = (2+o(1)) \eps n$ and $R \leq 3+o(1) \eps^{-1} \log
(\eps^3 n)$, we conclude the proof of the upper bound by an
application of Theorem~\ref{thm-cover}. \qed

\section{Proof of Proposition \ref{addedge}}
We may assume that $|E(G)| \geq 2$.
Let $\pi$ be the stationary distribution of $G$ and let
$\{S^+_t\}_{t \geq 0}$ be a random walk on $G^+$ starting from the
initial distribution $\pi$ (note that $\pi$ is {\em not} the
stationary distribution for $G^+$).
Let $\tau_0 = \tau'_0 = 0$ and for all $i\geq 1$ define
\[\tau_{i}\deq \min \big\{t \geq \tau'_{i-1}: \{S^+_{t}, S^+_{t+1}\} = \{u, v\}\big\}\,, \, X_{i} \deq S^+_{\tau_i}\,, \mbox{ and } \tau'_i \deq \min \{t > \tau_i: S^+_t = X_i\}\,.\]
Write $T_i = \{t: \tau_i < t\leq \tau'_i\}$ and for all $t\in \N$
further define
\[\Phi(t) = \min\{k : |[0, k]\setminus \cup_{i=1}^\infty T_i| = t\}\,.\]
Now let $S_t = S^+_{\Phi(t)}$. We first claim that $S_t$ is a simple
random walk on the graph $G$. In order to see that, one just need to
note that $S_t$ is obtained from $S^+_t$ by omitting all the
excursions started with traveling through the edge $(u, v)$. Let
$\taucov$ be the first time when $S_t$ visits every vertex of $G$
and it then remains to bound $\E[\Phi(\taucov)]$.

 To this end, it is more convenient to consider the first time $\taucov^*$ when $S_t$ visits every vertex of $G$ and returns to the starting point. We wish to bound
the number of steps spent on the above defined excursions before
$\taucov^*$. Define \begin{align*}L_u(\taucov^*)& = \big | \big \{t\leq
\taucov^*: S_t = u \big\}\big|\,\, \mbox{ and } \,\,L_v(\taucov^*) = \big | \big \{t\leq
\taucov^*: S_t = v
\big \}\big| \,,\\
N_u(\taucov^*) &= \big | \big \{i: T_i \subseteq [0, \Phi(\taucov^*)],
X_{i} = u \big\} \big| \,\, \mbox { and }\,\, N_v(\taucov^*) = \big | \big\{i: T_i
\subseteq [0, \Phi(\taucov^*)], X_{i} = v \big \} \big| \,.\end{align*}

Note that every time when $S_t = u$, the corresponding random walk
$S^+_{\Phi(t)}$ is also at $u$ and has chance $\frac{1}{d_u+1}$ to
travel to $v$ and thus starts an excursion, and moreover, once
started the number of excursions has law $\mathrm{Geom}(1/(d_u+1))$
independent of $\{S_t\}$. Therefore, we have
$$N_u(\taucov^*) = \sum_{i=1}^{L_u(\taucov^*)} Y_i Z_i \, ,$$
where $\{(Y_i, Z_i)\}$ are independent and $Y_i \sim
\mathrm{Ber}(1/(d_u+1))$ and $Z_i \sim \mathrm{Geom}(1/(d_u + 1))$.
Thus, $\E[N_u(\taucov^*)] = \frac{1}{d_u}\E[L_u(\taucov^*)]$. By
\cite{AF}*{Chapter 2, Proposition 3}, we know that
$\E[L_u(\taucov^*)] = \frac{d_u}{2 |E(G)|} \E[\taucov^*]$ and
therefore $\E[N_u(\taucov^*)] = \frac{1}{2|E(G)|} \E [\taucov^*]$.
Suppose $X_{i} = u$, each $T_i$ is distributed as $1 +
\tau^+_u$ where $\tau^+_u$ is the hitting time of $S^+_t$ to $u$
started at $v$. Observing that $\{|T_i|\}$ are independent of
$N_u(\taucov^*)$, we can then obtain that
\[\mathrm{Exc}(u)\deq\E[|\cup_i\{T_i \subseteq [0, \Phi(\taucov^*)]: X_{i} = u\}|] = \frac{1}{2|E(G)|} \E [\taucov^*] (1 + \E_v[\tau^+_u])\,.\]
In the same manner, we derive that
\[\mathrm{Exc}(v)\deq\E[|\cup_i\{T_i \subseteq [0, \Phi(\taucov^*)]: X_{i} =v\}|] = \frac{1}{2|E(G)|} \E [\taucov^*] (1 + \E_u[\tau^+_v])\,.\]

Note that $\E_v [ \tau^+_u] + \E_u [\tau^+_v]$ is the expected
commute time between $u$ and $v$ and hence by commute identity
\cite{CRRST}, we have $\E_v [ \tau^+_u] + \E_u [\tau^+_v] = 2
|E(G^+)| R^+(u, v)$, where $R^+(u, v)$ is the resistance between $u$
and $v$ in $G^+$. Since $G$ is connected, we get $R^+(u, v) \leq
\frac{|E(G)|}{|E(G)| + 1}$. Altogether,
\[\tcov(G^+) = \E[\Phi(\taucov)] \leq \tcov(G) + \mathrm{Exc}(u) + \mathrm{Exc}(v) \leq 3 \tcov(G) + \frac{2}{|E(G)|} \tcov(G) \leq 4 \tcov(G)\,,\]
where we used the inequality $\E[\taucov^*] \leq 2 \tcov$ and the
assumption that $|E(G)| \geq 2$. \qed

\begin{remark}
If $G^+$ is obtained from a connected graph $G$ by adding $k$ extra
edges, a similar argument gives that
\[\tcov(G^+) \leq \big(2k+1 + \tfrac{2k^2}{|E|}\big)\tcov(G)\,.\]
\end{remark}

\section{A concluding remark}

The bound (\ref{dudley}) is reminiscent of Dudley's entropy bound for Gaussian process \cite{Dudley}. Motivated by this, Ding, Lee and Peres \cite{DLP} show the link to Gaussian processes is much tighter. In particular, Talagrand's majorizing measures bound for Gaussian processes (see \cite{Talagrand}) can be used to estimate the cover time up to a multiplicative constant.

\newpage

\begin{bibdiv}
\begin{biblist}

\bib{aldous2}{article}{
   author={Aldous, David},
   title={An introduction to covering problems for random walks on graphs},
   journal={J. Theoret. Probab.},
   volume={2},
   date={1989},
   number={1},
   pages={87--89},
}

\bib{Aldous}{article}{
   author={Aldous, David J.},
   title={Random walk covering of some special trees},
   journal={J. Math. Anal. Appl.},
   volume={157},
   date={1991},
   number={1},
   pages={271--283},
}

\bib{AF}{book}{
    AUTHOR = {Aldous, David},
    AUTHOR = {Fill, James Allen},
    TITLE =  {Reversible {M}arkov Chains and Random Walks on Graphs},
    note = {In preparation, \texttt{http://www.stat.berkeley.edu/\~{}aldous/RWG/book.html}},
}

\bib{AKLLR}{article}{
   author={Aleliunas, Romas},
   author={Karp, Richard M.},
   author={Lipton, Richard J.},
   author={Lov{\'a}sz, L{\'a}szl{\'o}},
   author={Rackoff, Charles},
   title={Random walks, universal traversal sequences, and the complexity of
   maze problems},
   conference={
      title={20th Annual Symposium on Foundations of Computer Science (San
      Juan, Puerto Rico, 1979)},
   },
   book={
      publisher={IEEE},
      place={New York},
   },
   date={1979},
   pages={218--223},
}

\bib{Barlow}{article}{
   author={Barlow, M. T.},
   title={Continuity of local times for L\'evy processes},
   journal={Z. Wahrsch. Verw. Gebiete},
   volume={69},
   date={1985},
   number={1},
   pages={23--35},
}

\bib{BCHSS}{article}{
    AUTHOR = {Borgs, Christian},
    author=  {Chayes, Jennifer T.},
    author = {van der Hofstad, Remco},
    author = {Slade, Gordon},
    author = {Spencer, Joel},
     TITLE = {Random subgraphs of finite graphs. {I}. {T}he scaling window              under the triangle condition},
   JOURNAL = {Random Structures Algorithms},
    VOLUME = {27},
      YEAR = {2005},
    NUMBER = {2},
     PAGES = {137--184},
}

\bib{bollo}{article} {
    AUTHOR = {Bollob{\'a}s, B{\'e}la},
     TITLE = {The evolution of random graphs},
   JOURNAL = {Trans. Amer. Math. Soc.},
    VOLUME = {286},
      YEAR = {1984},
    NUMBER = {1},
     PAGES = {257--274},
}

\bib{Bridgland}{article}{
   author={Bridgland, Michael F.},
   title={Universal traversal sequences for paths and cycles},
   journal={J. Algorithms},
   volume={8},
   date={1987},
   number={3},
   pages={395--404},
   issn={0196-6774},
}

\bib{Broder}{article}{
   author={Broder, Andrei},
   title={Universal sequences and graph cover times. A short survey},
   conference={
      title={Sequences},
      address={Naples/Positano},
      date={1988},
   },
   book={
      publisher={Springer},
      place={New York},
   },
   date={1990},
   pages={109--122},
}

\bib{CRRST}{article}{
   author={Chandra, Ashok K.},
   author={Raghavan, Prabhakar},
   author={Ruzzo, Walter L.},
   author={Smolensky, Roman},
   author={Tiwari, Prasoon},
   title={The electrical resistance of a graph captures its commute and
   cover times},
   journal={Comput. Complexity},
   volume={6},
   date={1996/97},
   number={4},
   pages={312--340},
}

\bib{CF}{article}{
   author={Cooper, Colin},
   author={Frieze, Alan},
   title={The cover time of the giant component of a random graph},
   journal={Random Structures Algorithms},
   volume={32},
   date={2008},
   number={4},
   pages={401--439},
}

\bib{DKLP1}{article}{
    author = {Ding, Jian},
    author = {Kim, Jeong Han},
    author = {Lubetzky, Eyal},
    author = {Peres, Yuval},
    title = {Anatomy of a young giant component in the random graph},
    journal={Random Structures Algorithms, to appear},
    note = {Available at \texttt{http://arxiv.org/abs/0906.1839}},
}

\bib{DKLP2}{article}{
    author = {Ding, Jian},
    author = {Kim, Jeong Han},
    author = {Lubetzky, Eyal},
    author = {Peres, Yuval},
    title = {Diameters in supercritical random graphs via first passage percolation},
    journal = {Combinatorics, Probability and Computing, to appear}
    note = {Available at \texttt{http://arxiv.org/abs/0906.1840}},
}

\bib{DLP}{article}{
    author={Ding, Jian},
    author={Lee, James R.},
    author={Peres, Yuval},
    title={Cover times, blanket times, and majorizing measures},
    note={Preprint. Available at \texttt{http://arxiv.org/abs/1004.4371}}
}

\bib{Dudley}{article} {
    AUTHOR = {Dudley, R. M.},
     TITLE = {The sizes of compact subsets of Hilbert space and continuity
              of Gaussian processes},
   JOURNAL = {J. Functional Analysis},
    VOLUME = {1},
      YEAR = {1967},
     PAGES = {290--330},
}

\bib{ER59}{article}{
   author={Erd{\H{o}}s, P.},
   author={R{\'e}nyi, A.},
   title={On random graphs. I},
   journal={Publ. Math. Debrecen},
   volume={6},
   date={1959},
   pages={290--297},
}

\bib{ERDREN}{article} {
    AUTHOR = {Erd\H{o}s, P.},
    author = {R\'enyi, A.},
     TITLE = {On the evolution of random graphs},
   JOURNAL = {Bull. Inst. Internat. Statist.},
    VOLUME = {38},
      YEAR = {1961},
     PAGES = {343--347},
  }

\bib{Feige-up}{article}{
   author={Feige, Uriel},
   title={A tight upper bound on the cover time for random walks on graphs},
   journal={Random Structures Algorithms},
   volume={6},
   date={1995},
   number={1},
   pages={51--54},
}

\bib{Feige-lower}{article}{
   author={Feige, Uriel},
   title={A tight lower bound on the cover time for random walks on graphs},
   journal={Random Structures Algorithms},
   volume={6},
   date={1995},
   number={4},
   pages={433--438},
}

\bib{Feller2}{book} {
    AUTHOR = {Feller, William},
     TITLE = {An introduction to probability theory and its applications.
              {V}ol. {II}. },
    SERIES = {Second edition},
 PUBLISHER = {John Wiley \& Sons Inc.},
   ADDRESS = {New York},
      YEAR = {1971},
}

\bib{HH0}{article} {
   author  = {Heydenreich, Markus},
   author  = {van der Hofstad, Remco},
     TITLE = {Random graph asymptotics on high-dimensional tori},
   JOURNAL = {Comm. Math. Phys.},
    VOLUME = {270},
      YEAR = {2007},
    NUMBER = {2},
     PAGES = {335--358},
}

\bib{HH}{article} {
   author={Heydenreich, Markus},
   author={van der Hofstad, Remco},
   title={Random graph asymptotics on high-dimensional tori II. Volume, diameter and mixing time},
   note = {preprint},
}


\bib{Jansonbook}{book}{
    AUTHOR = {Janson, Svante},
     author = {{\L}uczak, Tomasz},
     author = {Rucinski, Andrzej},
     TITLE = {Random graphs},
    SERIES = {Wiley-Interscience Series in Discrete Mathematics and
              Optimization},
 PUBLISHER = {Wiley-Interscience, New York},
      YEAR = {2000},
     PAGES = {xii+333},
      ISBN = {0-471-17541-2},
}

\bib{Jonasson}{article}{
   author={Jonasson, Johan},
   title={Lollipop graphs are extremal for commute times},
   journal={Random Structures Algorithms},
   volume={16},
   date={2000},
   number={2},
   pages={131--142},
}

\bib{KKLV}{article}{
   author={Kahn, J.},
   author={Kim, J. H.},
   author={Lov{\'a}sz, L.},
   author={Vu, V. H.},
   title={The cover time, the blanket time, and the Matthews bound},
   conference={
      title={41st Annual Symposium on Foundations of Computer Science
      (Redondo Beach, CA, 2000)},
   },
   book={
      publisher={IEEE Comput. Soc. Press},
      place={Los Alamitos, CA},
   },
   date={2000},
}

\bib{KR}{article}{
   author={Karlin, Anna R.},
   author={Raghavan, Prabhakar},
   title={Random walks and undirected graph connectivity: a survey},
   conference={
      title={Discrete probability and algorithms},
      address={Minneapolis, MN},
      date={1993},
   },
   book={
      series={IMA Vol. Math. Appl.},
      volume={72},
      publisher={Springer},
      place={New York},
   },
   date={1995},
   pages={95--101},
}

\bib{KSN} {article} {
    AUTHOR = {Kesten, H.},
    author ={Ney, P.},
    author ={Spitzer, F.},
     TITLE = {The {G}alton-{W}atson process with mean one and finite
              variance},
   JOURNAL = {Teor. Verojatnost. i Primenen.},
    VOLUME = {11},
      YEAR = {1966},
     PAGES = {579--611}
}

\bib{Kolchin}{book} {
    AUTHOR = {Kolchin, Valentin F.},
     TITLE = {Random mappings},
    SERIES = {Translation Series in Mathematics and Engineering},
      NOTE = {Translated from the Russian,
              With a foreword by S. R. S. Varadhan},
 PUBLISHER = {Optimization Software Inc. Publications Division},
   ADDRESS = {New York},
      YEAR = {1986},
     PAGES = {xiv + 207},
}

\bib{KN1}{article}{
   author = {Kozma, Gady},
   author={Nachmias, Asaf},
   title={The Alexander-Orbach conjecture holds in high dimensions},
   journal={Inventiones Mathematicae},
   volume={178},
   date={2009},
   number={3},
   pages={635--654},
}

\bib{KN2}{article}{
   author = {Kozma, Gady},
   author={Nachmias, Asaf},
   title={A note about critical percolation on finite graphs},
   journal={J. of Theoretical Probability, to appear},
}

\bib{LPW}{book}{
   author={Levin, David A.},
   author={Peres, Yuval},
   author={Wilmer, Elizabeth L.},
   title={Markov chains and mixing times},
   note={With a chapter by James G. Propp and David B. Wilson},
   publisher={American Mathematical Society},
   place={Providence, RI},
   date={2009},
   pages={xviii+371},
}

\bib{Lucz}{article} {
    AUTHOR = {{\L}uczak, Tomasz},
     TITLE = {Component behavior near the critical point of the random graph process},
   JOURNAL = {Random Structures Algorithms},
    VOLUME = {1},
      YEAR = {1990},
    NUMBER = {3},
     PAGES = {287--310},
}

\bib{LP}{book}{
    author = {{R. Lyons with Y. Peres}},
    title = {Probability on Trees and Networks},
    publisher = {Cambridge University Press},
    date = {2008},
    note = {In preparation. Current version available at \texttt{http://mypage.iu.edu/\~{}rdlyons/prbtree/book.pdf}},
}

\bib{LW}{article}{
   author={Luczak, Malwina},
   author={Winkler, Peter},
   title={Building uniformly random subtrees},
   journal={Random Structures Algorithms},
   volume={24},
   date={2004},
   number={4},
   pages={420--443},
}

\bib{Matthews}{article}{
   author={Matthews, Peter},
   title={Covering problems for Markov chains},
   journal={Ann. Probab.},
   volume={16},
   date={1988},
   number={3},
   pages={1215--1228},
}

\bib{MP}{article}{
   author={Mihail, Milena},
   author={Papadimitriou, Christos H.},
   title={On the random walk method for protocol testing},
   conference={
      title={Computer aided verification},
      address={Stanford, CA},
      date={1994},
   },
   book={
      series={Lecture Notes in Comput. Sci.},
      volume={818},
      publisher={Springer},
      place={Berlin},
   },
   date={1994},
   pages={132--141},
}

\bib{Nachmias}{article}{
   author={Nachmias, Asaf},
   title={Mean-field conditions for percolation on finite graphs},
   journal={Geometric and Functional Analysis},
   volume={19},
   date={2009},
   pages={1171--1194},
}

\bib{NP2}{article}{
   author={Nachmias, Asaf},
   author={Peres, Yuval},
   title={Critical percolation on random regular graphs},
   journal={Random Structures and Algorithms},
   note = {to appear},
}

\bib{NP1}{article}{
   author={Nachmias, Asaf},
   author={Peres, Yuval},
   title={Critical random graphs: diameter and mixing time},
   journal={Ann. Probab.},
   volume={36},
   date={2008},
   number={4},
   pages={1267--1286},
}

\bib{NashWilliams}{article}{
   author={Nash-Williams, C. St. J. A.},
   title={Random walk and electric currents in networks},
   journal={Proc. Cambridge Philos. Soc.},
   volume={55},
   date={1959},
   pages={181--194},
}

\bib{Pittel}{article} {
    AUTHOR = {Pittel, Boris},
     TITLE = {Edge percolation on a random regular graph of low degree},
   JOURNAL = {Ann. Probab.},
    VOLUME = {36},
      YEAR = {2008},
    NUMBER = {4},
     PAGES = {1359--1389},
}

\bib{stroock}{book} {
    AUTHOR = {Stroock, Daniel W.},
    author = {Varadhan, S. R. Srinivasa},
     TITLE = {Multidimensional diffusion processes},
    SERIES = {Grundlehren der Mathematischen Wissenschaften [Fundamental
              Principles of Mathematical Sciences]},
    VOLUME = {233},
 PUBLISHER = {Springer-Verlag},
   ADDRESS = {Berlin},
      YEAR = {1979},
     PAGES = {xii+338},
      ISBN = {3-540-90353-4},
}

\bib{Talagrand}{book} {
    AUTHOR = {Talagrand, Michel},
     TITLE = {The generic chaining},
    SERIES = {Springer Monographs in Mathematics},
      NOTE = {Upper and lower bounds of stochastic processes},
 PUBLISHER = {Springer-Verlag},
   ADDRESS = {Berlin},
      YEAR = {2005},
     PAGES = {viii+222}
}

\bib{Tetali}{article}{
   author={Tetali, Prasad},
   title={Random walks and the effective resistance of networks},
   journal={J. Theoret. Probab.},
   volume={4},
   date={1991},
   number={1},
   pages={101--109},
}

\bib{WZ}{article} {
    AUTHOR = {Winkler, Peter},
    author = {Zuckerman, David},
     TITLE = {Multiple cover time},
   JOURNAL = {Random Structures Algorithms},
    VOLUME = {9},
      YEAR = {1996},
    NUMBER = {4},
     PAGES = {403--411},
}

\end{biblist}
\end{bibdiv}

\end{document}